\DeclareMathOperator*{\essinf}{ess\ inf}
\DeclareMathOperator*{\esssup}{ess\ sup}
\newtheorem{thm}{Theorem}[section]
\newtheorem{lemma}[thm]{Lemma}
\newtheorem{prop}[thm]{Proposition}
\newtheorem{cor}[thm]{Corollary}
\newtheorem{rem}[thm]{Remark}
\let\epsilon\varepsilon
\begin{document}
\author{Mihály Kovács$^{1,2,3}$}
\thanks{M. Kov\'acs acknowledges the support of the Hungarian National Research, Development and Innovation Office (NKFIH) through Grant no. TKP2021-NVA-02 and Grant no. K-145934.}
\author{Mihály A. Vághy$^3$}
\thanks{M. A. Vághy acknowledges the support of the ÚNKP-23-3-II-PPKE-81 National Excellence Program of the Ministry for Culture and Innovation from the source of the National Research, Development and Innovation Fund.}
\thanks{This article is based upon work from COST Action CA18232 MAT-DYN-NET, supported by COST (European Cooperation in Science and Technology).}
\address{$^1$Department of Mathematical Sciences, Chalmers University of Technology and University of Gothenburg, SE-41296 Gothenburg, Sweden}
\address{$^2$Department of Analysis and Operations Research, Budapest University of Technology and Economics, Műegyetem rkp. 3-9, H-1111 Budapest, Hungary}
\address{$^3$Faculty of Information Technology and Bionics, Pázmány Péter Catholic University, Práter u. 50/a, H-1444 Budapest, Hungary}

\title{Neumann-Neumann type domain decomposition of elliptic problems on metric graphs}
\subjclass{35R02; 65F08; 65N22; 65N55}
\keywords{quantum graphs; elliptic partial differential equations; domain decomposition methods; finite element methods}

\begin{abstract}
	In this paper we develop a Neumann-Neumann type domain decomposition method for elliptic problems on metric graphs. We describe the iteration in the continuous and discrete setting and rewrite the latter as a preconditioner for the Schur complement system. Then we formulate the discrete iteration as an abstract additive Schwarz iteration and prove that it convergences to the finite element solution with a rate that is independent of the finite element mesh size. We show that the condition number of the Schur complement is also independent of the finite element mesh size. We provide an implementation and test it on various examples of interest and compare it to other preconditioners.
\end{abstract}

\maketitle

\section{Introduction}
In recent decades differential operators on metric graphs have found a myriad of applications when describing quasi-one-dimensional phenomena in a broad range of fields, such as superconductivity in granular materials \cite{Alexander1983}, classical wave propagation in wave guide networks \cite{Flesia1987,Flesia1989}, membrane potential of neurons \cite{Kallianpur1984}, cell differentiation \cite{Cho2018}, and optimal control \cite{Avdonin2023,Avdonin2019,Mehandiratta2021,Stoll2021}. 

We consider a quantum graph; that is, a metric graph $\mathsf{G}$ equipped with an elliptic differential operator on each edge and certain standard vertex conditions. The graph consists of a finite set $\mathsf{V}$ of vertices and a finite set $\mathsf{E}$ of edges connecting pairs of vertices. We assume that the graph is simple and does not contain parallel edges or loops. Let $n=|\mathsf{V}|$ denote the number of vertices and $m=|\mathsf{E}|$ the number of edges. We assume that the graph is directed; that is, each edge has a specified (but otherwise arbitrary) orientation, and thus an origin and a terminal vertex. Each edge $\mathsf{e}\in\mathsf{E}$ is assigned a length $\ell_{\mathsf{e}}\in(0,\infty)$ and a local coordinate $x\in[0,\ell_{\mathsf{e}}]$.

A function $u$ on a metric graph $\mathsf{G}$ can be defined as a vector of functions and we write $u=(u_{\mathsf{e}})_{\mathsf{e}\in\mathsf{E}}$, and consider it to be an element of a product function space, to be specified later. Let $u_{\mathsf{e}}(\mathsf{v})$ denote the value of $u$ at $\mathsf{v}\in\mathsf{V}$ along the edge $\mathsf{e}\in\mathsf{E}$.

To define the vertex conditions, let us denote by $\mathsf{E_v}$ the set of edges incident to the vertex $\mathsf{v}\in\mathsf{V}$, and by $d_{\mathsf{v}}=|\mathsf{E_v}|$ the degree of $\mathsf{v}\in\mathsf{V}$. We denote by $\mathrm{int}(\mathsf{G})$ the set of vertices with degree $d_{\mathsf{v}}>1$ and by $\partial\mathsf{G}$ the set $\mathsf{V}\backslash\mathrm{int}(\mathsf{G})$. We seek solutions that are continuous on $\mathsf{G}$
and satisfy the Neumann-Kirchhoff (often called standard) condition, given as
\begin{equation}
    \sum_{\mathsf{e}\in\mathsf{E}}u_{\mathsf{e}}'(\mathsf{v})=0,\qquad\mathsf{v}\in\mathsf{V},
\end{equation}
where the derivatives are assumed to be taken in the directions away from the vertex. If $d_{\mathsf{v}}=1$, then this reduces to the classical zero Neumann boundary condition.

In order to write the vertex conditions more compactly, let us define the vector of function values at $\mathsf{v}\in\mathsf{V}$ as
\begin{equation}
    U(\mathsf{v})=\qty\big(u_e(\mathsf{v}))_{\mathsf{e}\in\mathsf{E_v}}\in\mathbb{R}^{d_{\mathsf{v}}}
\end{equation}
and the bi-diagonal matrix
\begin{equation}
    I_{\mathsf{v}}=\begin{bmatrix}1&-1&&\\&\ddots&\ddots&\\&&1&-1\end{bmatrix}\in\mathbb{R}^{(d_{\mathsf{v}}-1)\times d_{\mathsf{v}}}.
\end{equation}
Then $I_{\mathsf{v}}U(\mathsf{v})=0\in\mathbb{R}^{d_{\mathsf{v}}-1}$ implies that the function values along the edges in $\mathsf{E_v}$ coincide at $\mathsf{v}\in\mathsf{V}$. Similarly, we define
\begin{equation}
    U'(\mathsf{v})=\qty\big(u_e'(\mathsf{v}))_{\mathsf{e}\in\mathsf{E_v}}\in\mathbb{R}^{d_{\mathsf{v}}},
\end{equation}
the vector of function derivative at $\mathsf{v}\in\mathsf{V}$ and the row vector
\begin{equation}
    C(\mathsf{v})^{\top}=\qty\big(c_e(\mathsf{v}))_{\mathsf{e}\in\mathsf{E_v}}^{\top}\in\mathbb{R}^{1\times d_{\mathsf{v}}}.
\end{equation}
Then $C(\mathsf{v})^{\top}U'(\mathsf{v})=0$ implies that the function $u$ satisfies the Neumann-Kirchhoff conditions at $\mathsf{v}\in\mathsf{V}$.

Then the quantum graphs can be formally written as
\begin{equation}\label{eq:qg_PDE}
    \left\{\begin{aligned}
        -(c_{\mathsf{e}} u_{\mathsf{e}}')'(x)+p_{\mathsf{e}}(x) u_{\mathsf{e}}(x)&=f_{\mathsf{e}}(x),&x\in(0,\ell_{\mathsf{e}}),~\mathsf{e}\in\mathsf{E},~~ & (a)\\
        0&=I_{\mathsf{v}}U(\mathsf{v}),&\mathsf{v}\in\mathrm{int}(\mathsf{G}),~~& (b)\\
        0&=C(\mathsf{v})^{\top}U'(\mathsf{v}),&\mathsf{v}\in\mathsf{V},~~& (c).
    \end{aligned}
    \right.
\end{equation}

We wish to approximate the solution of \eqref{eq:qg_PDE} in the finite element framework. In \cite{Arioli2017} a special finite element is assigned to the vertices that have a star shaped support on the neighbouring edges ensuring the continuity of solutions, and use standard finite elements on the edges. Then the authors prove usual error estimates and an upper bound of the Neumann-Kirchhoff residual of the discrete solution. However, the size of the corresponding stiffness matrix can quickly grow and it loses its banded (tridiagonal) nature. We also note that in applications where the graph itself is time-varying (for example if a road is blocked in a traffic network) it might be expensive to modify the structure of the stiffness matrix.

To overcome such issues, we investigate a Neumann-Neumann type nonoverlapping domain decomposition method. The mathematical background of overlapping domain decomposition methods originate from \cite{Schwarz1870}, which was further developed in \cite{Babuska1957,Morgenstern1956,Sobolev1936}. Later nonoverlapping methods gained attention due to their natural parallelism and efficiency in numerical applications along with the growth of high performance computing \cite{Dryja1987,Lions1988,Lions1989}. Many variants have been developed since, such as Lagrange multiplier based Finite Element Tearing and Interconnecting (FETI) methods \cite{Farhat1991,Klawonn2001}, least squares-control methods \cite{Gunzburger2000,Lions1981}, and multilevel or multigrid methods \cite{Bank1988,Brandt1977,Briggs2000}. In particular, Neumann-Neumann methods can be traced back to \cite{Bourgat1989,Roeck1991,Dihn1984,Tallec1991}. For introductory surveys we refer to \cite{Chan1994,Xu1992}, see also \cite[Chapter 7]{Brenner2007}, while more thorough theoretical background and historical overview can be found in \cite{Mathew2008,Smith1996,Toselli2005}. While certain domain decomposition methods have been successfully designed and applied for optimal control on networks \cite{Leugering1998,Leugering2000,Leugering2017,Leugering2023} and its theory was established in \cite{Lagnese2004}, to the authors knowledge, the performance and the convergence of Neumann-Neumann type iterative substructuring methods was never addressed. First, we rewrite the method as a preconditioner for the Schur complement system, then rigorously show via the abstract additive Schwarz framework that the iteration converges to the finite element solution with a geometric rate that is independent of the finite element mesh size. While preparing for this proof we show that the condition number of the underlying Schur complement is also indepenedent of the finite element mesh size.

The paper is organized as follows. Section 2 contains a brief overview of the abstract problem, the corresponding weak formulation and its FEM solution, and the abstract additive Schwarz framework. In Section 3 we introduce the Neumann-Neumann method and prove its convergence to the FEM solution through the Schwarz framework. We also formulate the method as a preconditioner to the Schur complement system. We note because of the quasi-one-dimensional nature of the problem we can use powerful tools like Sobolev's embedding, and thus our proofs are much simpler and more transparent then that of classical domain decomposition methods in two or more dimensions. Finally, in Section 4, we demonstrate the strength of our approach through various examples and compare it to other preconditioners.

\section{Preliminaries}
Let $L^2(a,b)$ be the Hilbert space of real-valued square-integrable functions equipped with the norm
\begin{equation}
    \norm{f}_{L^2(a,b)}^2=\int_a^b\qty\big|f(x)|^2\dd{x},\qquad f\in L^2(a,b),
\end{equation}
and $L^{\infty}(a,b)$ be the Banach space of real-valued essentially bounded functions equipped with the norm
\begin{equation}
    \norm{f}_{L^\infty(a,b)}=\esssup_{x\in(a,b)}\qty\big|f(x)|,\qquad f\in L^{\infty}(a,b),
\end{equation}
and $H^k(a,b)$ be the Sobolev space of real-valued square-integrable functions whose generalized derivatives up to the $k$th order are also square-integrable, equipped with the norm
\begin{equation}
    \norm{f}_{H^k(a,b)}^2=\sum_{j=0}^k\norm{f^{(j)}}_{L^2(a,b)}^2,\qquad f\in H^k(a,b),
\end{equation}
and $C[a,b]$ be the Banach space of real-valued continuous functions equipped with the supremum norm. Using these, we define the Banach spaces
\begin{equation}
    L^2(\mathsf{G})=\bigoplus_{\mathsf{e}\in\mathsf{E}}L^2(0,\ell_{\mathsf{e}}),\qquad L^{\infty}(\mathsf{G})=\bigoplus_{\mathsf{e}\in\mathsf{E}}L^{\infty}(0,\ell_{\mathsf{e}}),\qquad H^k(\mathsf{G})=\bigoplus_{\mathsf{e}\in\mathsf{E}}H^k(0,\ell_{\mathsf{e}}).
\end{equation}
endowed with the natural norms
\begin{equation}
    \begin{aligned}
        &\norm{u}_{L^2(\mathsf{G})}^2:=\sum_{\mathsf{e}\in\mathsf{E}}\norm{u_{\mathsf{e}}}_{L^2(0,\ell_{\mathsf{e}})}^2,\qquad&&u=(u_{\mathsf{e}})_{\mathsf{e}\in\mathsf{E}}\in L^2(\mathsf{G}),\\
        &\norm{u}_{L^{\infty}(\mathsf{G})}^2:=\max_{\mathsf{e}\in\mathsf{E}}\norm{u_{\mathsf{e}}}_{L^{\infty}(0,\ell_{\mathsf{e}})},\qquad&&u=(u_{\mathsf{e}})_{\mathsf{e}\in\mathsf{E}}\in L^{\infty}(\mathsf{G}),\\
        &\norm{u}_{H^k(\mathsf{G})}^2:=\sum_{\mathsf{e}\in\mathsf{E}}\norm{u_{\mathsf{e}}}_{H^k(0,\ell_{\mathsf{e}})}^2,\qquad&&u=(u_{\mathsf{e}})_{\mathsf{e}\in\mathsf{E}}\in H^k(\mathsf{G}).
    \end{aligned}
\end{equation}
We note that the spaces $L^2(\mathsf{G})$ and $H^k(\mathsf{G})$ are Hilbert spaces with the natural inner products. Finally, we define the space of continuous functions defined on $\mathsf{G}$ as
\begin{equation}
    C(\mathsf{G}):=\qty\Big{u=(u_\mathsf{e})_{\mathsf{e}\in\mathsf{E}}\Big|I_{\mathsf{v}}U(\mathsf{v})=0,~\forall \mathsf{e}\in\mathsf{E}:u_{\mathsf{e}}\in C[0,\ell_{\mathsf{e}}]}.
\end{equation}

\subsection{The abstract problem}
On $L^2(\mathsf{G})$ we define the elliptic operator
\begin{equation}
    \mathcal{A}_{\mathrm{max}}:=\mathrm{diag}\qty\Bigg(-\dv{}{x}\qty\bigg(c_{\mathsf{e}}\dv{}{x})+p_{\mathsf{e}})_{\mathsf{e}\in\mathsf{E}},\qquad D(\mathcal{A}_{\mathrm{max}})=H^2(\mathsf{G}).
\end{equation}
We further define the feedback operator $\mathcal{B}:D(\mathcal{A}_{\mathrm{max}})\mapsto\mathcal{Y}$ by
\begin{equation}
    \mathcal{B}u=\begin{bmatrix}\qty\big(I_{\mathsf{v}}U(\mathsf{v}))_{\mathsf{v}\in\mathsf{V}}\\\qty\big(C(\mathsf{v})^{\top}U'(\mathsf{v}))_{\mathsf{v}\in\mathsf{V}}\end{bmatrix},\qquad D(\mathcal{B})=D(\mathcal{A}_{\mathrm{max}}),
\end{equation}
where $\mathcal{Y}=\ell^2(\mathbb{R}^{2m})\eqsim\mathbb{R}^{2m}$. Finally, we define
\begin{equation}
    \mathcal{A}:=\mathcal{A}_{\mathrm{max}},\qquad D(\mathcal{A}):=\qty\big{u\in D(\mathcal{A}_{\mathrm{max}}):~\mathcal{B}u=0_{\mathcal{Y}}}.
\end{equation}
Throughout the paper we assume that $c=\qty\big(c_{\mathsf{e}})_{\mathsf{e}\in\mathsf{E}}:\mathsf{G}\mapsto\mathbb{R}$ is a positive Lipschitz function, that the function $p=\qty\big(p_{\mathsf{e}})_{\mathsf{e}\in\mathsf{E}}\in L^{\infty}(\mathsf{G})$ satisfies $\essinf_{x\in\mathsf{G}}~p(x)\ge p_0$ for some $p_0>0$, and that $f=\qty\big(f_{\mathsf{e}})_{\mathsf{e}\in\mathsf{E}}\in L^2(\mathsf{G})$. Using this, we can reformulate \eqref{eq:qg_PDE} as follows: find $u\in D(\mathcal{A})$ such that
\begin{equation}\label{eq:|PDE|}
    \mathcal{A}u=f.
\end{equation}

\subsection{Weak formulation and FEM}
While \eqref{eq:|PDE|} is well-posed \cite[Proposition 3.1]{Bolin2023}, for our purposes it is convenient to introduce a weak formulation of \eqref{eq:qg_PDE}. The corresponding bilinear form $\mathfrak{a}:H^1(\mathsf{G})\times H^1(\mathsf{G})\mapsto\mathbb{R}$ is defined as
\begin{equation}
    \begin{aligned}
        \mathfrak{a}(u,v)&=\sum_{\mathsf{e}\in\mathsf{E}}\qty\Bigg(\int_{\mathsf{e}}c_{\mathsf{e}}(x)u_{\mathsf{e}}'(x)v_{\mathsf{e}}'(x)\dd{x}+\int_{\mathsf{e}}p_{\mathsf{e}}(x)u_{\mathsf{e}}(x)v_{\mathsf{e}}(x)\dd{x}),\\
        D(\mathfrak{a})&=\qty\big{u\in H^1(\mathsf{G}):I_{\mathsf{v}}U(\mathsf{v})=0,~\mathsf{v}\in\mathsf{V}},
    \end{aligned}
\end{equation}
see \cite[Lemma 3.3]{Mugnolo2006} and \cite[Lemma 3.4]{Mugnolo2007}. We highlight the the Neumann-Kirchhoff condition do not appear in this bilinear form or in its domain. Thus, we seek a solution $u\in D(\mathfrak{a})$ such that
\begin{equation}\label{eq:weak_PDE}
    \mathfrak{a}(u,v)=f(v),\qquad v\in D(\mathfrak{a}),
\end{equation}
where $f(v):=\langle f,v\rangle_{L^2(\mathsf{G})}$. It is well-known that under our assumptions the symmetric bilinear form $\mathfrak{a}(\cdot,\cdot)$ is bounded and coercive, and thus \eqref{eq:weak_PDE} is well-posed in light of the Riesz representation theorem. Moreover, the unique solution of \eqref{eq:weak_PDE} is the unique solution of \eqref{eq:|PDE|}.

Following \cite{Arioli2017} for the sake of notational simplicity we consider an equidistant discretization on the edges. This approach and our subsequent analysis can be trivially generalized to the nonequidistant case. We divide each edge $\mathsf{e}=(\mathsf{v}_a^{\mathsf{e}},\mathsf{v}_b^{\mathsf{e}})$ into $n_{\mathsf{e}}\ge2$ intervals of length $h_{\mathsf{e}}\in(0,1)$. For the resulting $\qty{x_j^{\mathsf{e}}}_{j=1,2,\dots,n_{\mathsf{e}}-1}$ nodes we introduce the standard basis $\qty{\psi_j^\mathsf{e}}_{j=1,2,\dots,n_{\mathsf{e}}-1}$ of hat functions
\begin{equation}\label{eq:edge_hats}
    \psi_j^{\mathsf{e}}(x)=\begin{cases}
        1-\frac{|x_j^{\mathsf{e}}-x|}{h_{\mathsf{e}}},\quad&\text{if }x\in\qty\big[x_{j-1}^{\mathsf{e}},x_{j+1}^{\mathsf{e}}],\\
        0,&\text{otherwise,}
    \end{cases}
\end{equation}
where $x_0^{\mathsf{e}}=\mathsf{v}_a^{\mathsf{e}}$ and $x_{n_{\mathsf{e}}}^{\mathsf{e}}=\mathsf{v}_b^{\mathsf{e}}$. These functions are a basis of the finite-dimensional space $V_h^{\mathsf{e}}\subset H_0^1(0,\ell_{\mathsf{e}})\cap C[0,\ell_{\mathsf{e}}]$ of piecewise linear functions.

To each $\mathsf{v}$ we assign a special hat function $\phi_{\mathsf{v}}$ supported on the neighbouring set $W_{\mathsf{v}}$ of the vertex defined as
\begin{equation}
    W_{\mathsf{v}}=\qty\Bigg(\bigcup_{\mathsf{e}\in\mathsf{E}:\mathsf{v}_a^{\mathsf{e}}=\mathsf{v}}\qty\big[\mathsf{v},x_1^{\mathsf{e}}])\cup\qty\Bigg(\bigcup_{\mathsf{e}\in\mathsf{E}:\mathsf{v}_b^{\mathsf{e}}=\mathsf{v}}\qty\big[x_{n_{\mathsf{e}}-1}^{\mathsf{e}},\mathsf{v}]).
\end{equation}
Then $\phi_{\mathsf{v}}$ is defined as
\begin{equation}
    \phi_{\mathsf{v}}(x^{\mathsf{e}})=\begin{cases}
        1-\frac{|x_{\mathsf{v}}^{\mathsf{e}}-x^{\mathsf{e}}|}{h_{\mathsf{e}}},\quad&\text{if }x^{\mathsf{e}}\in W_{\mathsf{v}},\\
        0,&\text{otherwise,}
    \end{cases}
\end{equation}
where $x_{\mathsf{v}}^{\mathsf{e}}$ is either $0$ or $\ell_{\mathsf{e}}$ depending on the orientation of the edge.

We define the space
\begin{equation}
    V_h(\mathsf{G})=\qty\Bigg(\bigoplus_{\mathsf{e}\in\mathsf{E}}V_h^{\mathsf{e}})\oplus\mathrm{span}\qty{\phi_{\mathsf{v}}}_{\mathsf{v}\in\mathsf{V}}
\end{equation}
of piecewise linear functions. Note, that $V_h(\mathsf{G})\subset H^1(\mathsf{G})\cap C(\mathsf{G})=D(\mathfrak{a})$ by construction. Any function $w_h\in V_h(\mathsf{G})$ is a linear combination of the basis functions:
\begin{equation}
    w_h(x)=\sum_{\mathsf{e}\in\mathsf{E}}\sum_{j=1}^{n_{\mathsf{e}}-1}\alpha_j^{\mathsf{e}}\phi_j^{\mathsf{e}}(x)+\sum_{\mathsf{v}\in\mathsf{V}}\beta_{\mathsf{v}}\phi_{\mathsf{v}}(x).
\end{equation}
Thus the solution of \eqref{eq:weak_PDE} can be approximated by finding $u_h\in V_h(\mathsf{G})$ such that
\begin{equation}\label{eq:discrete_PDE}
    \mathfrak{a}(u_h,v_h)=f(v_h),\qquad v_h\in V_h(\mathsf{G}).
\end{equation}
Equivalently, we can test only on the basis functions. Since the neighbouring set of distinct vertices are disjoint we have that
\begin{equation}\label{eq:discrete_PDE_formula}
    \begin{aligned}
        \mathfrak{a}(w_h,\psi_k^{\mathsf{e}})&=\sum_{\mathsf{e}\in\mathsf{E}}\sum_{j=1}^{n_{\mathsf{e}}-1}\alpha_j^{\mathsf{e}}\int_{\mathsf{e}}\qty\big(c_{\mathsf{e}}{\psi_j^{\mathsf{e}}}'{\psi_k^{\mathsf{e}}}'+p_{\mathsf{e}}\psi_j^{\mathsf{e}}\psi_k^{\mathsf{e}})\dd{x}\\
        &+\sum_{\mathsf{v}\in\mathsf{V}}\beta_{\mathsf{v}}\int_{\mathsf{e}}\qty\big(c_{\mathsf{e}}{\phi_{\mathsf{v}}}'{\psi_k^{\mathsf{e}}}'+p_{\mathsf{e}}\phi_{\mathsf{v}}\psi_k^{\mathsf{e}})\dd{x}=f(\psi_k^{\mathsf{e}}),\quad k=1,2,\dots,n_{\mathsf{e}-1},~\mathsf{e}\in\mathsf{E},\\
        \mathfrak{a}(w_h,\phi_{\mathsf{v}})&=\sum_{\mathsf{e}\in\mathsf{E}}\sum_{j=1}^{n_{\mathsf{e}}-1}\alpha_j^{\mathsf{e}}\int_{\mathsf{e}}\qty\big(c_{\mathsf{e}}{\psi_j^{\mathsf{e}}}'{\phi_{\mathsf{v}}}'+p_{\mathsf{e}}\psi_j^{\mathsf{e}}\phi_{\mathsf{v}})\dd{x}\\
        &+\sum_{\mathsf{v}\in\mathsf{V}}\beta_{\mathsf{v}}\int_{\mathsf{e}}\qty\big(c_{\mathsf{e}}{\phi_{\mathsf{v}}}'{\phi_{\mathsf{v}}}'+p_{\mathsf{e}}\phi_{\mathsf{v}}\phi_{\mathsf{v}})\dd{x}=f(\psi_k^{\mathsf{e}}),\quad\mathsf{v}\in\mathsf{V}.\\
    \end{aligned}
\end{equation}
Let us denote by
\begin{equation}
    \boldsymbol{u}=\begin{bmatrix}u_{\mathsf{E}}\\u_{\mathsf{V}}\end{bmatrix},\qquad u_{\mathsf{E}}=\begin{bmatrix}u^{\mathsf{e}_1}\\u^{\mathsf{e}_2}\\\vdots\\u^{\mathsf{e}_m}\end{bmatrix},\qquad u^{\mathsf{e}}=\begin{bmatrix}u_1^{\mathsf{e}}\\u_2^{\mathsf{e}}\\\vdots\\u_{n_e-1}^{\mathsf{e}}\end{bmatrix},\qquad u_{\mathsf{V}}=\begin{bmatrix}u_{\mathsf{v}_1}\\u_{\mathsf{v}_2}\\\vdots\\u_{\mathsf{v}_n}\end{bmatrix}
\end{equation}
the vector of values that define the finite element function
\begin{equation}
    u_h(x)=\sum_{\mathsf{e}\in\mathsf{E}}\sum_{j=1}^{n_{\mathsf{e}}-1}u_j^{\mathsf{e}}\phi_j^{\mathsf{e}}(x)+\sum_{\mathsf{v}\in\mathsf{V}}u_{\mathsf{v}}\phi_{\mathsf{v}}(x),
\end{equation}
and by
\begin{equation}
    \boldsymbol{f}=\begin{bmatrix}f_{\mathsf{E}}\\f_{\mathsf{V}}\end{bmatrix},\qquad f_{\mathsf{E}}=\begin{bmatrix}f^{\mathsf{e}_1}\\f^{\mathsf{e}_2}\\\vdots\\f^{\mathsf{e}_m}\end{bmatrix},\qquad f^{\mathsf{e}}=\begin{bmatrix}f_1^{\mathsf{e}}\\f_2^{\mathsf{e}}\\\vdots\\f_{n_e-1}^{\mathsf{e}}\end{bmatrix},\qquad f_{\mathsf{V}}=\begin{bmatrix}f_{\mathsf{v}_1}\\f_{\mathsf{v}_2}\\\vdots\\f_{\mathsf{v}_n}\end{bmatrix}
\end{equation}
the vector of values
\begin{equation}
    f_k^{\mathsf{e}}=\int_{\mathsf{e}}f\psi_k^{\mathsf{e}}\dd{x},\qquad f_{\mathsf{v}}=\int_{W_{\mathsf{v}}}f\phi_{\mathsf{v}}\dd{x}.
\end{equation}
Then \eqref{eq:discrete_PDE_formula} can be rewritten as
\begin{equation}\label{eq:discrete_PDE_matrix}
    A\boldsymbol{u}=\boldsymbol{f},
\end{equation}
where the stiffness matrix $A$ has block structure as follows:
\begin{equation}
    A=\begin{bmatrix}
        A_{\mathsf{E}} & A_{\mathsf{EV}}\\
        A_{\mathsf{VE}} & A_{\mathsf{V}}
    \end{bmatrix}+\begin{bmatrix}
        B_{\mathsf{E}} & B_{\mathsf{EV}}\\
        B_{\mathsf{VE}} & B_{\mathsf{V}}
    \end{bmatrix}.
\end{equation}
Here
\begin{enumerate}
    \item the matrix $A_{\mathsf{E}}=\mathrm{diag}(A_{\mathsf{e}})_{\mathsf{e}\in\mathsf{E}}$ is block diagonal and the entries of the tridiagonal matrix $A_{\mathsf{e}}$ are given by
        \begin{equation}
            [A_{\mathsf{e}}]_{jk}=\int_{\mathsf{e}}c_{\mathsf{e}}{\psi_j^{\mathsf{e}}}'{\psi_k^{\mathsf{e}}}'\dd{x},\qquad j,k=1,2,\dots,n_{\mathsf{e}}-1
        \end{equation}
    \item the entries of the blocks of $A_{\mathsf{EV}}^{\top}=A_{\mathsf{VE}}=(A_{\mathsf{e}})_{\mathsf{e}\in\mathsf{E}}$ are given by
        \begin{equation}
            [A_{\mathsf{e}}]_{\mathsf{v}k}=\int_{W_{\mathsf{v}}}c_{\mathsf{e}}{\phi_{\mathsf{v}}}'{\psi_k^{\mathsf{e}}}'\dd{x},\qquad k=1,2,\dots,n_{\mathsf{e}}-1,~\mathsf{v}\in\mathsf{V},
        \end{equation}
    \item the entries of the diagonal matrix $A_{\mathsf{V}}=\mathrm{diag}(A_{\mathsf{v}})_{\mathsf{v}\in\mathsf{V}}$ are given by
        \begin{equation}
            A_{\mathsf{v}}=\int_{W_{\mathsf{v}}}c_{\mathsf{e}}{\phi_{\mathsf{v}}}'{\phi_{\mathsf{v}}}'\dd{x},
        \end{equation}
    \item the matrix $B_{\mathsf{E}}=\mathrm{diag}(B_{\mathsf{e}})_{\mathsf{e}\in\mathsf{E}}$ is block diagonal and the entries of the tridiagonal matrix $B_{\mathsf{e}}$ are given by
        \begin{equation}
            [B_{\mathsf{e}}]_{jk}=\int_{\mathsf{e}}p_{\mathsf{e}}\psi_j^{\mathsf{e}}\psi_k^{\mathsf{e}}\dd{x},\qquad j,k=1,2,\dots,n_{\mathsf{e}}-1
        \end{equation}
    \item the entries of the blocks of $B_{\mathsf{EV}}^{\top}=B_{\mathsf{VE}}=(B_{\mathsf{e}})_{\mathsf{e}\in\mathsf{E}}$ are given by
        \begin{equation}
            [B_{\mathsf{e}}]_{\mathsf{v}k}=\int_{W_{\mathsf{v}}}p_{\mathsf{e}}\phi_{\mathsf{v}}\psi_k^{\mathsf{e}}\dd{x},\qquad k=1,2,\dots,n_{\mathsf{e}}-1,~\mathsf{v}\in\mathsf{V},
        \end{equation}
    \item the entries of the diagonal matrix $B_{\mathsf{V}}=\mathrm{diag}(B_{\mathsf{v}})_{\mathsf{v}\in\mathsf{V}}$ are given by
        \begin{equation}
            B_{\mathsf{v}}=\int_{W_{\mathsf{v}}}p_{\mathsf{e}}\phi_{\mathsf{v}}\phi_{\mathsf{v}}\dd{x}.
        \end{equation}
\end{enumerate}
While matrix $A$ is sparse it is no longer banded as in the traditional FEM for problems on domains, and thus the solution of \eqref{eq:discrete_PDE_matrix} is computationally more intensive.

Similarly to standard error estimates in the FEM framework the $H^1(\mathsf{G})$ error of the finite element solution $u_h$ and the weak solution $u$ is $\mathcal{O}(\hat h)$, where $\hat h:=\max_{\mathsf{e}\in\mathsf{E}}h_{\mathsf{e}}$ and the $L^2(\mathsf{G})$ error is $\mathcal{O}(\hat h^2)$, see \cite[Theorem 3.2]{Arioli2017} for the special case when $c\equiv1$ and \cite[Propositions 6.1-6.2]{Bolin2023} for the general case.

\subsection{Abstract additive Schwarz framework}
In this section we recall the abstract Schwarz framework based on \cite{Dryja1995,Toselli2005}. Let $V$ be a finite dimensional space with the inner product $b(u,v)$ and consider the abstract problem
\begin{equation}\label{eq:asm_ap}
    b(u,v)=f(v),\qquad v\in V.
\end{equation}
Let
\begin{equation}
    V=V_1+V_2+\dots+V_N
\end{equation}
be a not necessarily direct sum of spaces with corresponding symmetric, positive definite bilinear forms $b_i(\cdot,\cdot)$ defined on $V_i\times V_i$. Define the projection-like operators $T_i:V\mapsto V_i$ by
\begin{equation}
    b_i(T_iu,v_i)=b(u,v_i),\qquad v_i\in V_i
\end{equation}
and let
\begin{equation}
    T=T_1+T_2+\dots+T_N.
\end{equation}
Note that if $b_i(u,v)=b(u,v)$ then the operator $T_i$ is equal to the $b(\cdot,\cdot)$-orthogonal projection $P_i$. However, the generality of this framework allows the use of inexact local solvers.

The operator $T$ is used to equivalently reformulate \eqref{eq:asm_ap} as
\begin{equation}\label{eq:asm_Tu}
    Tu=g=\sum_{i=1}^Ng_i=\sum_{i=1}^NT_iu,
\end{equation}
where $g_i$ is obtained by solving
\begin{equation}
    b_i(g_i,v_i)=b(u,v_i)=f(v),\qquad v_i\in V_i.
\end{equation}

The following theorem is the cornerstone of the abstract additive Schwarz framework \cite[Theorem 1]{Dryja1995}.

\begin{thm}\label{theorem:Schwarz}
    Assume that
    \begin{enumerate}[label=(\roman*)]
        \item there exists a constant $C_0>0$ such that there exists a decomposition $u=\sum_{i=1}^Nu_i$ for all $v\in V$, where $u_i\in V_i$, such that
            \begin{equation}
                \sum_{i=1}^Nb_i(u_i,u_i)\le C_0^2b(u,u),
            \end{equation}
        \item there exists a constant $\omega>0$ such that the inequality
            \begin{equation}
                b(u_i,u_i)\le\omega b_i(u_i,u_i),\qquad u_i\in V_i
            \end{equation}
            holds for $i=1,2,\dots,N$,
        \item there exist constants $\epsilon_{ij}\ge0$ such that
            \begin{equation}
                b(u_i,u_j)\le\epsilon_{ij}b^{\frac{1}{2}}(u_i,u_i)b^{\frac{1}{2}}(u_j,u_j),\qquad u_i\in V_i,~u_j\in V_j,
            \end{equation}
            for $i,j=1,2,\dots,N$.
    \end{enumerate}
    Then $T$ is invertible and
    \begin{equation}
        C_0^{-2}b(u,u)\le b(Tu,u)\le\rho(\mathcal{E})\omega b(u,u),\qquad u\in V,
    \end{equation}
    where $\rho(\mathcal{E})$ is the spectral radius of the matrix $\mathcal{E}=\qty{\epsilon_{ij}}_{i,j=1}^N$.
\end{thm}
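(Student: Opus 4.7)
The plan is to follow the standard Dryja--Widlund argument, with three conceptual phases: first extracting $b$-symmetry of each $T_i$ and the key identity $b_i(T_iu,T_iu)=b(u,T_iu)$; then deriving the lower bound from the stable decomposition in (i); and finally deriving the upper bound from (ii) and (iii) via the spectral radius of $\mathcal{E}$. Invertibility will fall out of the lower bound since $V$ is finite-dimensional.

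I begin by observing that the defining relation $b_i(T_iu,v_i)=b(u,v_i)$ for $v_i\in V_i$, combined with symmetry of both $b$ and $b_i$, yields $b(T_iu,v)=b(u,T_iv)$ for all $u,v\in V$, so each $T_i$ (and hence $T$) is $b$-self-adjoint. Setting $v_i=T_iu$ gives the workhorse identity
\begin{equation*}
    b_i(T_iu,T_iu)=b(u,T_iu),
\end{equation*}
which lets me trade between the local and global forms whenever one argument is of the form $T_iu$.

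For the lower bound, I take $u\in V$, use (i) to write $u=\sum_i u_i$ with $\sum_i b_i(u_i,u_i)\le C_0^2\,b(u,u)$, and compute
\begin{equation*}
    b(u,u)=\sum_i b(u,u_i)=\sum_i b_i(T_iu,u_i)\le\Bigl(\sum_i b_i(T_iu,T_iu)\Bigr)^{1/2}\Bigl(\sum_i b_i(u_i,u_i)\Bigr)^{1/2},
\end{equation*}
using the Cauchy--Schwarz inequality termwise in $b_i$ and then discretely in $\mathbb{R}^N$. By the key identity and linearity the first factor equals $b(u,Tu)^{1/2}$, while the second is at most $C_0\,b(u,u)^{1/2}$. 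Rearranging yields $C_0^{-2}b(u,u)\le b(Tu,u)$.

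For the upper bound, I expand
\begin{equation*}
    b(Tu,Tu)=\sum_{i,j}b(T_iu,T_ju)\le\sum_{i,j}\epsilon_{ij}\,b(T_iu,T_iu)^{1/2}\,b(T_ju,T_ju)^{1/2}
\end{equation*}
using (iii). Setting $x_i:=b(T_iu,T_iu)^{1/2}\ge 0$, the right-hand side is $x^{\top}\mathcal{E}x$. Since $b$ is symmetric I may replace $\epsilon_{ij}$ by $\min(\epsilon_{ij},\epsilon_{ji})$ and so assume $\mathcal{E}$ symmetric with nonnegative entries, which gives $x^{\top}\mathcal{E}x\le\rho(\mathcal{E})\|x\|^2=\rho(\mathcal{E})\sum_i b(T_iu,T_iu)$. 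Applying (ii) followed by the key identity bounds the last sum by $\omega\sum_i b_i(T_iu,T_iu)=\omega\,b(u,Tu)$. Hence $b(Tu,Tu)\le\rho(\mathcal{E})\,\omega\,b(Tu,u)\le\rho(\mathcal{E})\,\omega\,b(Tu,Tu)^{1/2}b(u,u)^{1/2}$ by Cauchy--Schwarz in $b$, which after cancellation gives $b(Tu,Tu)\le(\rho(\mathcal{E})\omega)^2\,b(u,u)$; a final Cauchy--Schwarz step yields $b(Tu,u)\le b(Tu,Tu)^{1/2}b(u,u)^{1/2}\le\rho(\mathcal{E})\,\omega\,b(u,u)$.

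Invertibility of $T$ follows immediately: the lower bound forces any $u$ with $Tu=0$ to satisfy $b(u,u)=0$, and $V$ is finite-dimensional. I do not foresee any serious obstacle; the only step that deserves a moment's care is passing from (iii) to $x^{\top}\mathcal{E}x\le\rho(\mathcal{E})\|x\|^2$, where one must exploit the freedom to symmetrize $\epsilon_{ij}$ using the symmetry of $b$. Everything else is linear algebra organized around the self-adjointness of the $T_i$.
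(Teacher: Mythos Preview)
Your proof is correct and is precisely the standard Dryja--Widlund argument. Note that the paper does not actually prove this theorem: it is stated as a quoted result with a citation to \cite{Dryja1995}, so there is no in-paper proof to compare against. Your write-up faithfully reproduces the classical proof from that reference, including the key identity $b_i(T_iu,T_iu)=b(u,T_iu)$, the Cauchy--Schwarz splitting for the lower bound, and the $x^{\top}\mathcal{E}x\le\rho(\mathcal{E})\|x\|^2$ step for the upper bound; the only minor remark is that one can bypass the detour through $b(Tu,Tu)\le(\rho(\mathcal{E})\omega)^2 b(u,u)$ by noting that $b(Tu,Tu)\le\rho(\mathcal{E})\omega\,b(Tu,u)$ together with $b$-self-adjointness of $T$ directly bounds the largest eigenvalue of $T$ by $\rho(\mathcal{E})\omega$.
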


Theorem \ref{theorem:Schwarz} ensures the existence of a unique solution of \eqref{eq:asm_Tu} and provides the bound $\kappa(T)\le C_0^{-2}\rho(\mathcal{E})\omega$ for the condition number of $T$ w.r.t. the inner product $b(\cdot,\cdot)$, through its Rayleigh quotient. Thus, an upper bound can be computed for the geometric convergence rate of a conjugate gradient or minimal residual method applied to \eqref{eq:asm_Tu}.

\section{Neumann-Neumann method}
We decompose $\mathsf{G}$ to disjoint (w.r.t. its edges) subgraphs $\qty\big{\mathsf{G}_i=(\mathsf{V}_i,\mathsf{E}_i)}_{i=1,2,\dots,N}$. We note that each subgraph is itself a metric graph and that a subgraph may consist of only one edge. The set of vertices that are shared on the boundary of multiple subgraphs will be denoted with $\Gamma$ and called the interface. The corresponding function values are denoted as $u_{\Gamma}=\qty\big(u(\mathsf{v}))_{\mathsf{v}\in\Gamma}$.

\subsection{Continuous version}
The idea of Neumann-Neumann methods is to keep track of the interface values and iteratively update these values based on the deviation from the Neumann-Kirchhoff condition. Formally, we start the algorithm from a zero (or any inexpensive) initial guess $u_{\Gamma}^0$. For $n\ge0$ the new iterate is computed as follows: first we solve the Dirichlet problems

\begin{equation}
    (D_i)\qquad\left\{\begin{aligned}
        f_{\mathsf{e}}(x)&=-(c_{\mathsf{e}}{u_{\mathsf{e}}^{n+\frac{1}{2}}}')'(x)+p_{\mathsf{e}}(x)u_{\mathsf{e}}^{n+\frac{1}{2}}(x),&x\in(0,\ell_{\mathsf{e}}),~\mathsf{e}\in\mathsf{E}_i,~~ & (a)\\
        0&=I_{\mathsf{v}}U_i^{n+\frac{1}{2}}(\mathsf{v}),&\mathsf{v}\in\mathsf{V}_i\backslash\Gamma,~~& (b)\\
        u_{\Gamma}^n(\mathsf{v})&=U_i^{n+\frac{1}{2}}(\mathsf{v}),&\mathsf{v}\in\mathsf{V}_i\cap\Gamma,~~& (c)\\
        0&=C_i(\mathsf{v})^{\top}{U_i^{n+\frac{1}{2}}}'(\mathsf{v}),&\mathsf{v}\in\mathsf{V}_i\backslash\Gamma.~~& (d)
    \end{aligned}
    \right.
\end{equation}
Here the function $C_i$ is the restriction of $C$ to $\mathsf{G}_i$. Note, that we impose natural boundary conditions on the set of vertices $\partial\mathsf{G}_i\cap\partial\mathsf{G}$, but we will still refer to these problems as Dirichlet problems. Then we compute the solutions of the residual Neumann problems
\begin{equation}
    (N_i)\qquad\left\{\begin{aligned}
        0&=-(c_{\mathsf{e}}{w_{\mathsf{e}}^{n+1}}')'(x)+p_{\mathsf{e}}(x)w_{\mathsf{e}}^{n+1}(x),&x\in(0,\ell_{\mathsf{e}}),~\mathsf{e}\in\mathsf{E}_i,~~ & (a)\\
        0&=I_{\mathsf{v}}W_i^{n+\frac{1}{2}}(\mathsf{v}),&\mathsf{v}\in\mathsf{V}_i\backslash\Gamma,~~& (b)\\
        0&=C_i(\mathsf{v})^{\top}{W_i^{n+1}}'(\mathsf{v}),&\mathsf{v}\in\mathsf{V}_i\backslash\Gamma,~~& (c)\\
        &\sum_{i:\mathsf{v}\in\mathsf{V}_i}C_i(\mathsf{v})^{\top}{U_i^{n+\frac{1}{2}}}'(\mathsf{v})=C_i(\mathsf{v})^{\top}{W_i^{n+1}}'(\mathsf{v}),&\mathsf{v}\in\mathsf{V}_i\cap\Gamma.~~&(d)
    \end{aligned}
    \right.
\end{equation}
Finally, we update the interface values as
\begin{equation}\label{eq:uG}
    u_{\Gamma}^{n+1}(\mathsf{v})=u_{\Gamma}^n(\mathsf{v})-\theta\sum_{\mathsf{e}\in\mathsf{E_v}}w_{\mathsf{e}}^{n+1}(\mathsf{v}),\qquad\mathsf{v}\in\Gamma,
\end{equation}
with an appropriate $\theta\in(0,\theta_{\max})$, for some $\theta_{\max}>0$ \cite[Chapter C.3]{Toselli2005}.

\subsection{Discrete version}
In this section we briefly overview some technical tools essential for our subsequent results based on \cite{Mathew2008,Toselli2005}. While in our analysis we will mostly rely on variational notations we will introduce some of the tools in matrix form. For the sake of notational simplicity the following introduction is carried out for a decomposition into two subgraphs.

Let us consider the linear equation $A\boldsymbol{u}=\boldsymbol{f}$ arising from the finite element approximation of an elliptic problem quantum graph $\mathsf{G}=(\mathsf{V},\mathsf{E})$, where $A$ is a symmetric, positive definite matrix. We assume that $\mathsf{G}$ is partitioned into two nonoverlapping subgraphs $\qty\big{\mathsf{G}_i=(\mathsf{V}_i,\mathsf{E}_i)}_{i=1,2}$; that is, we have that
\begin{equation}
    \mathsf{E}=\mathsf{E}_1\cup\mathsf{E}_2,~\mathsf{E}_1\cap\mathsf{E}_2=\emptyset,~\Gamma=\mathsf{V}_1\cap\mathsf{V}_2.
\end{equation}
We recall that in traditional domain decomposition methods we would require that the solution be continuous along the interface and that the normal derivative w.r.t. the domains sum to zero; that is, they are virtually identical to the continuity and Neumann-Kirchhoff conditions at the vertices. We highlight, that while the latter condition is quite natural and has a clear interpretation for quantum graphs, it is not straightforward to define its functional meaning for problems on domains.

\subsubsection{Subassembly and Schur complement systems}
Let us partition the degrees of freedom into those internal to $\mathsf{G}_1$ and to $\mathsf{G}_2$, and those on $\Gamma$ and introduce
\begin{equation}
    A=\begin{bmatrix}A_{II}^{(1)} & 0 & A_{I\Gamma}^{(1)}\\0 & A_{II}^{(2)} & A_{I\Gamma}^{(2)}\\A_{\Gamma I}^{(1)} & A_{\Gamma I}^{(2)} & A_{\Gamma\Gamma}\end{bmatrix},~\boldsymbol{u}=\begin{bmatrix}u_I^{(1)}\\u_I^{(2)}\\u_{\Gamma}\end{bmatrix},~\boldsymbol{f}=\begin{bmatrix}f_I^{(1)}\\f_I^{(2)}\\f_{\Gamma}\end{bmatrix}.
\end{equation}
A crucial observation is that the stiffness matrix $A$ and load vector $f$ can be subassembled from the corresponding components of the (two) subgraphs. If for $i=1,2$ we denote by
\begin{equation}
    f^{(i)}=\begin{bmatrix}f_I^{(i)}\\f_{\Gamma}^{(i)}\end{bmatrix},~A^{(i)}=\begin{bmatrix}A_{II}^{(i)} & A_{I\Gamma}^{(i)}\\A_{\Gamma I}^{(i)} & A_{\Gamma\Gamma}^{(i)}\end{bmatrix}
\end{equation}
the right hand sides and local stiffness matrices of the corresponding elliptic problems with Neumann conditions, then we have that
\begin{equation}
    A_{\Gamma\Gamma}=A_{\Gamma\Gamma}^{(1)}+A_{\Gamma\Gamma}^{(2)},~f_{\Gamma}=f_{\Gamma}^{(1)}+f_{\Gamma}^{(2)}.
\end{equation}
We can find an approximation of the coupled problem as
\begin{equation}\label{eq:coupled}
    \left\{\begin{aligned}
        &A_{II}^{(i)}u_I^{(i)}+A_{I\Gamma}^{(i)}u_{\Gamma}^{(i)}=f_I^{(i)},~&i=1,2\\
        &u_{\Gamma}^{(1)}=u_{\Gamma}^{(2)}=:u_{\Gamma}\\
        &A_{\Gamma I}^{(1)}u_I^{(1)}+A_{\Gamma\Gamma}^{(1)}u_{\Gamma}^{(1)}-f_{\Gamma}^{(1)}=-\qty\big(A_{\Gamma I}^{(2)}u_I^{(2)}+A_{\Gamma\Gamma}^{(2)}u_{\Gamma}^{(2)}-f_{\Gamma}^{(2)})=:\lambda_{\Gamma},
    \end{aligned}
    \right.
\end{equation}
which is equivalent to \eqref{eq:discrete_PDE_matrix}. Clearly if we know the boundary values $u_{\Gamma}$ or the approximate normal derivative $\lambda_{\Gamma}$ the approximate solution inside the domains can be computed by separately solving two Dirichlet or two Neumann problems, respectively. Two well-known corresponding families of domain decomposition algorithms are the Neumann-Neumann and FETI methods. In this article we focus on the former.

To prepare our formal analysis the first standard step of iterative substructuring methods is to eliminate the unknowns $u_I^{(i)}$ with a block factorization
\begin{equation}
    A=\begin{bmatrix}I & 0 & 0\\0 & I & 0\\A_{\Gamma I}^{(1)}{A_{II}^{(1)}}^{-1} & A_{\Gamma I}^{(2)}{A_{II}^{(2)}}^{-1} & I\end{bmatrix}\begin{bmatrix}A_{II}^{(1)} & 0 & A_{I\Gamma}^{(1)}\\0 & A_{II}^{(2)} & A_{I\Gamma}^{(2)}\\ 0 & 0 & S\end{bmatrix},
\end{equation}
where $I$ is the identity matrix and $S=A_{\Gamma\Gamma}-A_{\Gamma I}^{(1)}{A_{II}^{(1)}}^{-1}A_{I\Gamma}^{(1)}-A_{\Gamma I}^{(2)}{A_{II}^{(2)}}^{-1}A_{I\Gamma}^{(2)}$ is the Schur complement relative to the unknowns on $\Gamma$. The corresponding linear system is given by
\begin{equation}
    \begin{bmatrix}A_{II}^{(1)} & 0 & A_{I\Gamma}^{(1)}\\0 & A_{II}^{(2)} & A_{I\Gamma}^{(2)}\\ 0 & 0 & S\end{bmatrix}u=\begin{bmatrix}f_I^{(1)}\\f_I^{(2)}\\g_{\Gamma}\end{bmatrix},
\end{equation}
where $g_{\Gamma}=f_{\Gamma}-A_{\Gamma I}^{(1)}{A_{II}^{(1)}}^{-1}f_I^{(1)}-A_{\Gamma I}^{(2)}{A_{II}^{(2)}}^{-1}f_I^{(2)}$. This can be further reduced to the Schur complement system
\begin{equation}\label{eq:schur}
    Su_{\Gamma}=g_{\Gamma}.
\end{equation}
The fact that $A_{\Gamma\Gamma}$ and $f_{\Gamma}$ can be subassembled from local contributions shows that the same holds for $S$ and $g_{\Gamma}$. Indeed, if for $i=1,2$ we define the local Schur complements by
\begin{equation}
    S^{(i)}:=A_{\Gamma\Gamma}^{(i)}-A_{\Gamma I}^{(i)}{A_{II}^{(i)}}^{-1}A_{I\Gamma}^{(i)}
\end{equation}
and
\begin{equation}
    g_{\Gamma}^{(i)}=f_{\Gamma}^{(i)}-A_{\Gamma I}^{(i)}{A_{II}^{(i)}}^{-1}f_I^{(i)},
\end{equation}
we have that $S=S^{(1)}+S^{(2)}$ and $g_{\Gamma}=g_{\Gamma}^{(1)}+g_{\Gamma}^{(2)}$. We recall the elementary fact that the Schur complement of an invertible block w.r.t. a positive definite matrix is also positive definite. 

Let us define the discrete version of the Neumann-Neumann iteration. Starting from a cheap initial guess $u_{\Gamma}^0$, in an iteration first we solve the Dirichlet problems
\begin{equation}
    (D_i)~~A_{II}^{(i)}u_I^{(i),n+\frac{1}{2}}+A_{I\Gamma}^{(i)}u_{\Gamma}^n=f_I^{(i)},\qquad i=1,2,
\end{equation}
then using the approximation $r_{\Gamma}$ for the flux residual (see the third row of \eqref{eq:coupled}) we solve the Neumann problems
\begin{equation}
    (N_i)~~\begin{bmatrix}A_{II}^{(i)}&A_{I\Gamma}^{(i)}\\A_{\Gamma I}^{(i)}&A_{\Gamma\Gamma}^{(i)}\end{bmatrix}\begin{bmatrix}w_I^{(i),n+1}\\w_{\Gamma}^{(i),n+1}\end{bmatrix}=\begin{bmatrix}0\\r_{\Gamma}\end{bmatrix},\qquad i=1,2.
\end{equation}
Finally, we update the interface values as 
\begin{equation*}
    u_{\Gamma}^{n+1}=u_{\Gamma}^n-\theta\qty\big(w_{\Gamma}^{(1),n+1}+w_{\Gamma}^{(2),n+1}).
\end{equation*}
Eliminating the variables interior to the subdomains of both Dirichlet and Neumann problems shows that
\begin{equation}
    u_{\Gamma}^{n+1}-u_{\Gamma}^n=\theta\qty\Big({S^{(1)}}^{-1}+{S^{(2)}}^{-1})\big(g_{\Gamma}-Su_{\Gamma}^n);
\end{equation}
that is, the Neumann-Neumann algorithm is a preconditioned Richardson iteration for \eqref{eq:schur} using ${S^{(1)}}^{-1}+{S^{(2)}}^{-2}$ as a preconditioner. Often an improved convergence rate can be reached if a further diagonal scaling is used based on the degress of the vertices on $\Gamma$ leading to a preconditioner of the form
\begin{equation*}
    D_{\Gamma}\qty\Big({S^{(1)}}^{-1}+{S^{(2)}}^{-1})D_{\Gamma},
\end{equation*}
where the diagonal elements of $D_{\Gamma}$ are $d_{\mathsf{v}}^{-1}$ for $\mathsf{v}\in\Gamma$. We note that we formulate this Richardson iteration mainly for historical reasons and to avoid the inconvenience of expressing the update of $u_{\Gamma}$ in the case of a more sophisticated iteration. However, in practice, one should instead use a preconditioned conjugate gradient or minimal residual method. Furthermore, the $S^{(i)}$ matrices and especially their inverses should usually not be formed, since we only need to know their effect when applied to a vector. Indeed, instead of multiplying with $S^{(i)}$ (and in particular with the inverse of $A_{II}^{(i)}$) we solve a Dirichlet problem and instead of multiplying with ${S^{(i)}}^{-1}$ we solve a Neumann problem. Other well-known iterative substructuring methods can similarly be characterized by finding a preconditioner for \eqref{eq:schur}. For example, the Dirichlet-Neumann (or Neumann-Dirichlet) corresponds to multiplying the equation with ${S^{(2)}}^{-1}$ (or ${S^{(1)}}^{-1}$).  Then the preconditioned operator ${S^{(2)}}^{-1}S=I+{S^{(2)}}^{-1}S^{(1)}$ corresponds to solving a Dirichlet problem on one subgraph and then solving a Neumann problem on the other.

If we partition $\mathsf{G}$ into many subgraphs a region is called floating if $\partial\mathsf{G}_i\cap\partial\mathsf{G}=\emptyset$. On floating subgraphs Neumann problems of certain elliptic equations, for example if there is no potential, are not uniquely solvable. A possible solution is to use balancing Neumann-Neumann methods, in which we choose a unique solution according to some compatibility condition. In this case the subsequent proof have to be slightly modified, see \cite{Toselli2005} for more details.

Finally, we note that the possibility of domain decomposition was mentioned in \cite{Arioli2017}, where the Schur complement system was solved with conjugate gradient method equipped with diagonal or polynomial preconditioner. These preconditioners are obtained by truncating the Neumann series expansion of
\begin{equation}
    S^{-1}=\qty\big(I-D_S^{-1}(D_S-S))^{-1}D_S^{-1}=\sum_{k=0}^{\infty}\qty\big(D_S^{-1}(D_S-S))^kD_S^{-1}
\end{equation}
to zeroth and first order, respectively, where $D_S$ is a diagonal matrix containing the diagonal elements of $S$. While usually the condition number of the stiffness matrix $A$ is $\mathcal{O}\qty\big(\hat h^{-2})$ and that of the Schur complement $S$ is $\mathcal{O}\qty\big(\hat h^{-1})$, the authors in \cite{Arioli2017} observed that for scale-free graphs the condition number of $S$ seems to be independent of $\hat h$ and proportional to the maximum degree. Furthermore, the dependence on the degree could be rectified with diagonal or polynomial preconditioning. However, these are purely algebraic preconditioners without the formalism of subdomains and without rigorous analysis. Finally, while the assembly of $S$ can be avoided, evaluating $S$ on unit vectors to determine the elements of $D_S$ elicits significant computational costs. 

\subsubsection{Discrete harmonic functions}
The space of discrete harmonic functions is an important subspace of finite element functions and are directly related to the Schur complements and to the interface values $u_{\Gamma}$.

Let us define for $u,v\in V_h(\mathsf{G})$ the bilinear forms corresponding to the global stiffness matrix $A$ and local stiffness matrices $A_i$ as
\begin{equation}
    a(u,v)=\boldsymbol{u}^{\top}A\boldsymbol{v}=\sum_{i=1}^Na^{(i)}(u,v)=\sum_{i=1}^Nu_I^{(i)^{\top}}A^{(i)}v_I^{(i)}.
\end{equation}

A function $u^{(i)}$ defined on $\mathsf{G}_i$ is said to be discrete harmonic on $\mathsf{G}_i$ if
\begin{equation}\label{eq:discrete_harmonic}
    A_{II}^{(i)}u_I^{(i)}+A_{I\Gamma}^{(i)}u_{\Gamma}^{(i)}=0.
\end{equation}
Clearly such a function is completely defined by its values on $\mathsf{V}_i\cap\Gamma$ and it is orthogonal, in the $a_i(\cdot,\cdot)$-inner product, to the space $V_h(\mathsf{G})\cap H_0^1(\mathsf{G}_i,\mathsf{V}_i\cap\Gamma)$, where $H_0^1(\mathsf{G},\mathsf{V}_D)\subset H^1(\mathsf{G})$ is the Sobolev space of functions that vanish on $\mathsf{V}_D\subset\mathsf{V}$. We denote the discrete harmonic extension as $u^{(i)}=:\mathcal{H}_i\qty\big(u_{\Gamma}^{(i)})$.

We denote the space of global, piecewise discrete harmonic functions by $V_h(\Gamma)\subset V_h(\mathsf{G})$, which consists of functions that are discrete harmonic on each subgraph. Based on subassembly arguments a function $u$ is in $V_h(\Gamma)$ if and only if $A_{II}u_I+A_{I\Gamma}u_{\Gamma}=0$ and such a function is completely determined by its values on the interface $\Gamma$. The space $V_h(\Gamma)$ is orthogonal, in the $a(\cdot,\cdot)$-inner product, to each space $V_h\cap H_0^1(\mathsf{G}_i,\mathsf{V}_i\cap\Gamma)$. We denote the piecewise discrete harmonic extension as $u=:\mathcal{H}(u_{\Gamma})$.

In the subsequent analysis the preconditioner will be defined w.r.t. the inner product defined by the Schur complement given by
\begin{equation}
    s(u,v)=u_{\Gamma}^{\top}Sv_{\Gamma}.
\end{equation}
We recall that $s(\cdot,\cdot)$ is symmetric and coercive.

The preceding argument shows that Neumann-Neumann methods can be regarded as computing the global, piecewise discrete harmonic part of the solution of \eqref{eq:discrete_PDE} by defining appropriate preconditioner for the Schur complement $S$. Before we investigate the convergence we must show the equivalence of the interface space, the Schur complement energy and the space of piecewise discrete harmonic functions in $H^1$. The following Lemma shows the energy equivalence of the Schur complement systems and piecewise discrete harmonic functions.
\begin{lemma}\label{lem:Schur_min}
    Let $u_{\Gamma}^{(i)}$ be the restriction of a finite element function to $\mathsf{V}_i\cap\Gamma$. The discrete harmonic extension $u^{(i)}=\mathcal{H}_i\qty\big(u_{\Gamma}^{(i)})$ satisfies
    \begin{equation}
        s_i\qty\big(u^{(i)},u^{(i)})=a_i\qty\big(u^{(i)},u^{(i)})=\min_{v^{(i)}|_{\mathsf{V}_i\cap\Gamma}=u_{\Gamma}^{(i)}}a_i\qty\big(v^{(i)},v^{(i)}).
    \end{equation}
    Similarly, if $u_{\Gamma}$ is the restriction of a finite element function to $\Gamma$, the piecewise discrete harmonic extension $u=\mathcal{H}(u_{\Gamma})$ satisfies
    \begin{equation}\label{eq:schur_min}
        s(u,u)=a(u,u)=\min_{v|_{\Gamma}=u_{\Gamma}}a(v,v).
    \end{equation}
\end{lemma}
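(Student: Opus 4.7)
The plan is to use the block structure of $A^{(i)}$ (and $A$) together with the defining equation \eqref{eq:discrete_harmonic} of the discrete harmonic extension. I would first treat a single subgraph $\mathsf{G}_i$ and then lift the argument to the global setting by subassembly.

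For the local statement, I would start by expanding
\begin{equation}
    a_i(u^{(i)},u^{(i)}) = (u_I^{(i)})^{\top} A_{II}^{(i)} u_I^{(i)} + 2\,(u_I^{(i)})^{\top} A_{I\Gamma}^{(i)} u_\Gamma^{(i)} + (u_\Gamma^{(i)})^{\top} A_{\Gamma\Gamma}^{(i)} u_\Gamma^{(i)},
\end{equation}
and then substituting $u_I^{(i)} = -(A_{II}^{(i)})^{-1} A_{I\Gamma}^{(i)} u_\Gamma^{(i)}$, which is exactly \eqref{eq:discrete_harmonic} solved for the interior degrees of freedom (this uses that $A_{II}^{(i)}$ is invertible, which follows from positive definiteness of $A^{(i)}$). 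A one-line calculation collapses the three terms to $(u_\Gamma^{(i)})^{\top}\bigl(A_{\Gamma\Gamma}^{(i)} - A_{\Gamma I}^{(i)}(A_{II}^{(i)})^{-1} A_{I\Gamma}^{(i)}\bigr) u_\Gamma^{(i)}$, which is precisely $s_i(u^{(i)},u^{(i)})$ by the definition of the local Schur complement $S^{(i)}$.

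For the minimization property, I would take an arbitrary finite element function $v^{(i)}$ on $\mathsf{G}_i$ with $v^{(i)}|_{\mathsf{V}_i\cap\Gamma} = u_\Gamma^{(i)}$ and write $v^{(i)} = u^{(i)} + w^{(i)}$. Then $w_\Gamma^{(i)} = 0$, so $w^{(i)} \in V_h(\mathsf{G}_i)\cap H_0^1(\mathsf{G}_i, \mathsf{V}_i \cap \Gamma)$. Expanding $a_i(v^{(i)},v^{(i)})$ gives a cross term $2\,a_i(u^{(i)}, w^{(i)})$, which equals $2\,(A_{II}^{(i)} u_I^{(i)} + A_{I\Gamma}^{(i)} u_\Gamma^{(i)})^{\top} w_I^{(i)}$ because $w_\Gamma^{(i)}=0$; this vanishes precisely by \eqref{eq:discrete_harmonic}. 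Since $A^{(i)}$ is positive semidefinite on the test subspace (in fact positive definite because of the coercivity coming from $p\ge p_0>0$), the remaining term $a_i(w^{(i)},w^{(i)})\ge 0$, proving the minimization characterization with equality iff $w^{(i)}=0$.

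The global statement \eqref{eq:schur_min} follows by the subassembly identities $A_{\Gamma\Gamma} = \sum_i A_{\Gamma\Gamma}^{(i)}$, $S = \sum_i S^{(i)}$ and the decomposition $a(u,v) = \sum_i a_i(u,v)$. Indeed, since $\mathcal{H}(u_\Gamma)$ is, by definition, discrete harmonic on each $\mathsf{G}_i$, summing the local identities $a_i(u,u) = s_i(u,u)$ yields $a(u,u) = s(u,u)$. For the minimization, a test function $v$ with $v|_\Gamma = u_\Gamma$ is written as $v = u + w$ with $w|_\Gamma = 0$; the orthogonality $a(u,w) = \sum_i a_i(u,w) = 0$ reduces, subgraph by subgraph, to the vanishing of the same cross term as above. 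I expect no real obstacle here — the only delicate point is to make sure that the piecewise nature of $\mathcal{H}(u_\Gamma)$ matches the piecewise splitting of the test function $w$ on each $\mathsf{G}_i$, but this is automatic from the block-diagonal structure of $A_{II}$.
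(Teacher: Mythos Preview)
Your proposal is correct and follows exactly the route the paper indicates: the paper's proof is a one-line remark that the statement ``follows directly from the definition of (piecewise) discrete harmonic functions in \eqref{eq:discrete_harmonic},'' and your argument is precisely the explicit unpacking of that remark via the block structure of $A^{(i)}$, the substitution $u_I^{(i)}=-(A_{II}^{(i)})^{-1}A_{I\Gamma}^{(i)}u_\Gamma^{(i)}$, and the orthogonality of the cross term. Nothing is missing, and the global case by subassembly is handled just as the paper intends.
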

\begin{proof}
    The statement follows directly from the definition of (piecewise) discrete harmonic functions in \eqref{eq:discrete_harmonic}.
\end{proof}

We define $d_i=\qty\big|\mathsf{V}_i\cap\Gamma|$ to be the number of vertices of $\mathsf{G}_i$ on the interface and the norm $\norm{\cdot}_{\mathsf{V}_i\cap\Gamma}=\norm{\cdot}_{\mathbb{R}^{d_i}}$.	Let $\mathcal{A}_{i,\max}:H^2(\mathsf{G}_i)\mapsto L^2(\mathsf{G}_i)$ be the operator corresponding to $\mathsf{G}_i$ inherited from $\mathsf{G}$ with $D(\mathcal{A}_{i,\max})=H^2(\mathsf{G}_i)$ and define $\tilde{\mathcal{B}}_i:D(\mathcal{A}_{i,\max})\mapsto\tilde{\mathcal{Y}}_i$ by
\begin{equation}
    \tilde{\mathcal{B}}_iu=\begin{bmatrix}\qty\big(I_{\mathsf{v}}U(\mathsf{v}))_{\mathsf{v}\in\mathsf{V}_i}\\\qty\big(C(\mathsf{v})^{\top}U'(\mathsf{v}))_{\mathsf{v}\in\mathsf{V}_i\backslash\Gamma}\end{bmatrix},\qquad D(\tilde{\mathcal{B}}_i)=D(\mathcal{A}_{i,\max}),
\end{equation}
where $\tilde{\mathcal{Y}}_i=\ell^2(\mathbb{R}^{2m_i-d_i})\eqsim\mathbb{R}^{2m_i-d_i}$. Finally, we define the continuous operator $\tilde{\mathcal{A}}_i:H^2(\mathsf{G}_i)\mapsto L^2(\mathsf{G}_i)$ as
\begin{equation}
    \tilde{\mathcal{A}}_i:=\mathcal{A}_{i,\max},\qquad D(\tilde{\mathcal{A}}_i):=\qty\big{u\in D(\mathcal{A}_{i,\max}):~\tilde{\mathcal{B}}_iu=0_{\tilde{\mathcal{Y}}_i}}.
\end{equation}
That is, a function $u \in D(\tilde{\mathcal{A}}_i)$ is continuous and satisfies the Neumann-Kirchhoff condition at the vertices but not necessarily on the interface $\Gamma$. A function $u\in D(\tilde{\mathcal{A}}_i)$ is said to be harmonic on $\mathsf{G}_i$ if $u\in\mathrm{Ker}(\tilde{\mathcal{A}}_i)$. A function $u\in H^2(\mathsf{G})\cap C(\mathsf{G})$ is said to be piecewise harmonic if $u\big|_{\mathsf{G}_i}\in D(\tilde{\mathcal{A}}_i)\cap\mathrm{Ker}(\tilde{\mathcal{A}}_i)$. Similarly to the discrete case, such a function is expected to be completely determined by the values at $\mathsf{V}_i\cap\Gamma$. The following lemma establishes the existence of the harmonic extension and the equivalence of the interface space and the space of piecewise harmonic functions in $H^2(\mathsf{G}_i)$.

\begin{lemma}\label{lem:equivalent}
    For given boundary data $u_{\Gamma}$ there exists a unique harmonic extension into $\mathsf{G}_i$, and consequently a unique piecewise harmonic extension $u$ into $\mathsf{G}$. Moreover, there exist positive constants $c$ and $C$ such that
    \begin{equation}
        c\norm{u_{\Gamma}}_{\mathsf{V}_i\cap\Gamma}^2\le\norm{u}_{H^2(\mathsf{G}_i)}^2\le C\norm{u_{\Gamma}}_{\mathsf{V}_i\cap\Gamma}^2.
    \end{equation}
\end{lemma}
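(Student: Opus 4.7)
The plan is to realize the harmonic extension as the solution of a coercive variational problem on $\mathsf{G}_i$ with Dirichlet data on $\mathsf{V}_i\cap\Gamma$ and Neumann--Kirchhoff conditions at the interior vertices of $\mathsf{G}_i$, and then bootstrap to $H^2$. Let $\mathfrak{a}_i$ denote the restriction of the bilinear form $\mathfrak{a}$ to $\mathsf{G}_i$ acting on the closed subspace
\[
V_i=\bigl\{u\in H^1(\mathsf{G}_i)\cap C(\mathsf{G}_i):u(\mathsf{v})=0\text{ for every }\mathsf{v}\in\mathsf{V}_i\cap\Gamma\bigr\}.
\]
Under the standing assumptions $c\ge c_0>0$ and $p\ge p_0>0$, $\mathfrak{a}_i$ is bounded and coercive on $V_i$, just as in the global case.

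First I would construct a cheap bounded lift $\mathcal{L}:u_\Gamma\mapsto g\in H^1(\mathsf{G}_i)\cap C(\mathsf{G}_i)$ with $g(\mathsf{v})=u_\Gamma(\mathsf{v})$ for $\mathsf{v}\in\mathsf{V}_i\cap\Gamma$ and $g\equiv0$ away from a neighbourhood of $\mathsf{V}_i\cap\Gamma$ on each edge, using the continuous piecewise linear hat at each interface vertex (analogous to the $\phi_\mathsf{v}$ of Section~2 but at the continuous level). Since the edge lengths are fixed positive constants of the graph, a direct computation gives $\|g\|_{H^1(\mathsf{G}_i)}\le C_{\mathcal L}\|u_\Gamma\|_{\mathsf{V}_i\cap\Gamma}$. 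Then I apply Lax--Milgram to find the unique $u_0\in V_i$ with $\mathfrak{a}_i(u_0,v)=-\mathfrak{a}_i(g,v)$ for all $v\in V_i$, and set $u=u_0+g$; uniqueness of $u$ follows because any two harmonic extensions with the same interface data differ by an element of $V_i$ satisfying $\mathfrak{a}_i(\cdot,v)=0$ for all $v\in V_i$, hence are equal by coercivity. The Lax--Milgram estimate and boundedness of $\mathfrak{a}_i$ yield $\|u\|_{H^1(\mathsf{G}_i)}\le C\|u_\Gamma\|_{\mathsf{V}_i\cap\Gamma}$.

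Next I would upgrade to $H^2$ on each edge. On any $\mathsf{e}\in\mathsf{E}_i$ the variational equation restricted to bump functions supported in $(0,\ell_\mathsf{e})$ forces $-(c_\mathsf{e}u_\mathsf{e}')'+p_\mathsf{e}u_\mathsf{e}=0$ in the distributional sense; since $c_\mathsf{e}$ is Lipschitz with $c_\mathsf{e}\ge c_0$, this rearranges to
\[
u_\mathsf{e}''=\frac{1}{c_\mathsf{e}}\bigl(p_\mathsf{e}u_\mathsf{e}-c_\mathsf{e}'u_\mathsf{e}'\bigr)\in L^2(0,\ell_\mathsf{e}),
\]
with norm controlled by $\|u_\mathsf{e}\|_{H^1(0,\ell_\mathsf{e})}$. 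Summing over edges and combining with the $H^1$ estimate gives the upper bound $\|u\|_{H^2(\mathsf{G}_i)}^2\le C\|u_\Gamma\|_{\mathsf{V}_i\cap\Gamma}^2$. That the vertex conditions $\tilde{\mathcal{B}}_iu=0$ at $\mathsf{v}\in\mathsf{V}_i\setminus\Gamma$ hold is recovered from the weak formulation by integrating by parts edge by edge and testing against hats at those vertices, exactly as in the standard quantum-graph argument recalled in Section~2.

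For the lower bound, I would invoke the one-dimensional Sobolev embedding $H^1(0,\ell_\mathsf{e})\hookrightarrow C[0,\ell_\mathsf{e}]$, which gives $|u_\mathsf{e}(\mathsf{v})|^2\le C_\mathsf{e}\|u_\mathsf{e}\|_{H^1(0,\ell_\mathsf{e})}^2$ for every endpoint $\mathsf{v}$ of $\mathsf{e}$. Summing over interface vertices and using continuity of $u$ at each vertex yields $\|u_\Gamma\|_{\mathsf{V}_i\cap\Gamma}^2\le C\|u\|_{H^1(\mathsf{G}_i)}^2\le C\|u\|_{H^2(\mathsf{G}_i)}^2$. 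The main obstacle I anticipate is constructing the lift $g$ with uniform control in a way compatible with continuity at all shared vertices of $\mathsf{G}_i$ (not only interface ones), but the localized hat construction above handles this since the hats at different vertices have disjoint support when the mesh-independent neighbourhoods are chosen small enough relative to the fixed edge lengths; once the lift is in place, everything else is routine elliptic regularity in one dimension.
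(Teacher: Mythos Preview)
Your argument is correct and complete, but it takes a genuinely different route from the paper. The paper proves the lemma abstractly: it introduces the trace operator $L:H^2(\mathsf{G}_i)\to\mathbb{R}^{d_i}$, observes that $\mathcal{A}_0:=\tilde{\mathcal{A}}_i|_{\mathrm{Ker}(L)}$ generates a strongly continuous semigroup with $0\in\rho(\mathcal{A}_0)$, and then invokes Greiner's boundary-perturbation lemma to conclude that $L|_{\mathrm{Ker}(\tilde{\mathcal{A}}_i)}$ is an isomorphism onto $\mathbb{R}^{d_i}$, from which both the existence/uniqueness and the upper $H^2$ bound follow in one stroke; the lower bound is the same Sobolev embedding you use. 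By contrast, you work variationally: explicit hat-function lift, Lax--Milgram on the Dirichlet subspace $V_i$, and then an edgewise elliptic-regularity bootstrap exploiting the Lipschitz coefficient to pass from $H^1$ to $H^2$. Your approach is more elementary and self-contained (no semigroup machinery or external references needed), and it makes the dependence of the constants on the graph data more transparent; the paper's approach is shorter and emphasizes that the result is an instance of a general operator-theoretic principle. Both are valid, and in fact your route mirrors more closely how the subsequent Lemma on discrete harmonic functions is handled.
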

\begin{proof}
    Let us define the $L:H^2(\mathsf{G}_i)\mapsto\mathbb{R}^{d_i}$ trace operator. Then for any $v\in H^2(\mathsf{G}_i)$ we have that
    \begin{equation}\label{eq:trace_lb}
        \norm{Lv}_{\mathsf{V}_i\cap\Gamma}\le\norm{v}_{L^{\infty}(\mathsf{G}_i)}\le c\norm{v}_{H^1(\mathsf{G}_i)}\le c\norm{v}_{H^2(\mathsf{G}_i)}.
    \end{equation}
    Clearly $\mathcal{A}_0:=\tilde{\mathcal{A}}_i\big|_{\mathrm{Ker}(L)}$ is the generator of a strongly continuous semigroup \cite{Mugnolo2007}, see also \cite[Section 6.5.1]{Mugnolo2014}. We have that $0\in\rho(\mathcal{A}_0)$ since $\mathcal{A}_0$ is invertible, and thus \cite[Lemma 1.2]{Greiner1987} shows that $L\big|_{\mathrm{Ker}(\tilde{\mathcal{A}}_i)}$ is an isomorphism of $\mathrm{Ker}(\tilde{\mathcal{A}}_i)$ onto $\mathbb{R}^{d_i}$; that is, the following inequality holds
    \begin{equation}
        \norm{u}_{H^2(\mathsf{G}_i)}\le C\norm{Lu}_{\mathsf{V}_i\cap\Gamma},
    \end{equation}
    and the proof is finished.
\end{proof}

Finally, the following lemma shows that a similar statement holds for discrete harmonic functions.
\begin{lemma}\label{lem:FEM_equivalent}
    Let $u$ be a piecewise discrete harmonic function on $\mathsf{G}$. Then there exist positive constants $c$ and $C$ independent of $\hat h$ such that
    \begin{equation}
        c\norm{u_{\Gamma}}_{\mathsf{V}_i\cap\Gamma}^2\le\norm{u}_{H^1(\mathsf{G}_i)}^2\le C\norm{u_{\Gamma}}_{\mathsf{V}_i\cap\Gamma}^2.
    \end{equation}
    Consequently, for some positive constants $\tilde c$ and $\tilde C$ independent of $\hat h$, we have that
    \begin{equation}\label{eq:Schur_equivalent}
        \tilde c\sum_{i=1}^N\norm{u_{\Gamma}}_{\mathsf{V}_i\cap\Gamma}^2\le s\qty\big(u,u)\le\tilde C\sum_{i=1}^N\norm{u_{\Gamma}}_{\mathsf{V}_i\cap\Gamma}^2.
    \end{equation}
\end{lemma}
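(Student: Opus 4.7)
The plan is to prove the first inequality for each subgraph $\mathsf{G}_i$ separately, after which the two-sided bound \eqref{eq:Schur_equivalent} for $s(u,u)$ follows immediately by summing over $i$ together with Lemma~\ref{lem:Schur_min} and the coercivity/boundedness of the local bilinear form $a_i(\cdot,\cdot)$ on $H^1(\mathsf{G}_i)$.

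For the \emph{lower bound} $c\,\norm{u_{\Gamma}}_{\mathsf{V}_i\cap\Gamma}^2\le\norm{u}_{H^1(\mathsf{G}_i)}^2$, I would exploit the fact that each edge is one-dimensional so Sobolev's embedding gives $\norm{v}_{L^\infty(\mathsf{G}_i)}\le c_S\norm{v}_{H^1(\mathsf{G}_i)}$ for every $v\in H^1(\mathsf{G}_i)$, with a constant $c_S$ depending only on the edge lengths. Applying this to the piecewise discrete harmonic function $u$ and summing the pointwise bounds $|u(\mathsf{v})|^2\le\norm{u}_{L^\infty(\mathsf{G}_i)}^2$ over the (finitely many, graph-dependent) vertices $\mathsf{v}\in\mathsf{V}_i\cap\Gamma$ yields
\begin{equation}
    \norm{u_{\Gamma}}_{\mathsf{V}_i\cap\Gamma}^2=\sum_{\mathsf{v}\in\mathsf{V}_i\cap\Gamma}|u(\mathsf{v})|^2\le d_i\,c_S^2\,\norm{u}_{H^1(\mathsf{G}_i)}^2,
\end{equation}
which is $\hat h$-independent.

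For the \emph{upper bound} $\norm{u}_{H^1(\mathsf{G}_i)}^2\le C\,\norm{u_{\Gamma}}_{\mathsf{V}_i\cap\Gamma}^2$, the strategy is to construct an explicit competitor in $V_h(\mathsf{G}_i)$ with the correct interface trace and then invoke the minimization characterization from Lemma~\ref{lem:Schur_min}. Concretely, let $\tilde u^{(i)}\in H^2(\mathsf{G}_i)$ be the continuous piecewise harmonic extension of $u_{\Gamma}|_{\mathsf{V}_i\cap\Gamma}$ supplied by Lemma~\ref{lem:equivalent}, which satisfies $\norm{\tilde u^{(i)}}_{H^2(\mathsf{G}_i)}\le C\norm{u_{\Gamma}}_{\mathsf{V}_i\cap\Gamma}$. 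Let $\Pi_h\tilde u^{(i)}\in V_h(\mathsf{G}_i)$ be the nodal interpolant; since interface vertices are nodes of the mesh, $\Pi_h\tilde u^{(i)}$ has the same trace $u_{\Gamma}$ on $\mathsf{V}_i\cap\Gamma$. Standard interpolation estimates on each edge give $\norm{\Pi_h\tilde u^{(i)}}_{H^1(\mathsf{G}_i)}\le\norm{\tilde u^{(i)}}_{H^1(\mathsf{G}_i)}+C\hat h\norm{\tilde u^{(i)}}_{H^2(\mathsf{G}_i)}\le C\norm{u_{\Gamma}}_{\mathsf{V}_i\cap\Gamma}$, with a constant independent of $\hat h$. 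Lemma~\ref{lem:Schur_min} then yields
\begin{equation}
    a_i(u,u)\le a_i(\Pi_h\tilde u^{(i)},\Pi_h\tilde u^{(i)}),
\end{equation}
and boundedness/coercivity of $a_i$ on $H^1(\mathsf{G}_i)$ (uniform in $\hat h$ thanks to the assumptions on $c_{\mathsf{e}}$ and $p_{\mathsf{e}}$) closes the chain $\norm{u}_{H^1(\mathsf{G}_i)}^2\lesssim a_i(u,u)\le a_i(\Pi_h\tilde u^{(i)},\Pi_h\tilde u^{(i)})\lesssim\norm{\Pi_h\tilde u^{(i)}}_{H^1(\mathsf{G}_i)}^2\lesssim\norm{u_{\Gamma}}_{\mathsf{V}_i\cap\Gamma}^2$.

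The part that needs the most care is the $\hat h$-independence of the upper bound, because a naive construction (e.g.\ multiplying the vertex hat function $\phi_{\mathsf{v}}$ by $u_{\Gamma}(\mathsf{v})$) has $H^1$-seminorm of order $\hat h^{-1/2}$ and is useless here; this is precisely why the argument routes the competitor through the \emph{continuous} harmonic extension of Lemma~\ref{lem:equivalent} and controls it in $H^2$, so that the $\hat h$ factor in the interpolation estimate is harmless rather than divergent. Once the per-subgraph equivalence is established, summing and applying Lemma~\ref{lem:Schur_min} gives $s(u,u)=\sum_i a_i(u,u)$, and the two-sided estimate \eqref{eq:Schur_equivalent} follows with $\tilde c$, $\tilde C$ depending only on the graph, the coefficients $c,p$, and the decomposition, but not on $\hat h$.
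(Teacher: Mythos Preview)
Your proof is correct and follows essentially the same route as the paper: the lower bound via the one-dimensional Sobolev embedding (the paper's \eqref{eq:trace_lb}), and the upper bound by passing through the continuous harmonic extension of Lemma~\ref{lem:equivalent}, interpolating it into $V_h(\mathsf{G}_i)$, invoking the standard $H^1$ interpolation estimate, and then using the energy-minimization property of Lemma~\ref{lem:Schur_min} together with the equivalence of $a_i(\cdot,\cdot)$ and the $H^1(\mathsf{G}_i)$ norm. Your remark on why the naive hat-function competitor fails is a useful clarification not spelled out in the paper.
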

\begin{proof}
    Let $u$ be piecewise discrete harmonic on $\mathsf{G}$ with boundary data $u_{\Gamma}$. The first inequality follows from \eqref{eq:trace_lb}. For the second inequality, let us consider the harmonic extension $v\in H^2(\mathsf{G}_i)$ of $u_{\Gamma}$ into $\mathsf{G}_i$, which uniquely exists in light of Lemma \ref{lem:equivalent}. Furthermore, the function $v$ is continuous and the standard linear interpolation operator $I_h$ can be used resulting in the finite element function $I_hv\in H^1(\mathsf{G}_i)$. Then by \eqref{eq:schur_min} we have that
    \begin{equation}
        \norm{u}_{H^1(\mathsf{G}_i)}\le Ca_i(u,u)\le Ca_i(I_hv,I_hv)\le C\norm{I_hv}_{H^1(\mathsf{G}_i)},
    \end{equation}
    since the $H^1(\mathsf{G}_i)$ norm is equivalent with the $a_i(\cdot,\cdot)$-norm. Furthermore,
    \begin{equation}
        \norm{I_hv}_{H^1(\mathsf{G}_i)}\le\norm{I_hv-v}_{H^1(\mathsf{G}_i)}+\norm{v}_{H^1(\mathsf{G}_i)}\le(C\hat h+1)\norm{v}_{H^2(\mathsf{G}_i)}\le C\norm{u_{\Gamma}}_{\mathsf{V}_i\cap\Gamma}.
    \end{equation}
    The third inequality is shown in the proof of \cite[Theorem 3.2]{Arioli2017} and in the last inequality we used Lemma \ref{lem:equivalent}.
\end{proof}

Let us define $d=|\Gamma|$, the norm $\norm{\cdot}_{\Gamma}=\norm{\cdot}_{\mathbb{R}^d}$ and $d_{\max}=\max_{\mathsf{v}\in\Gamma}\qty\big|\qty\big{j:\mathsf{v}\in\mathsf{V}_j}|$. Then \eqref{eq:Schur_equivalent} implies that
\begin{equation}
    c\norm{u_{\Gamma}}_{\mathbb{R}^d}^2\le s(u,u)\le Cd_{\max}\norm{u_{\Gamma}}_{\mathbb{R}^d}^2.
\end{equation}
The following statement is an immediate consequence.

\begin{cor}\label{corollary:S}
    The condition number of the Schur complement $S$ is a constant that is independent of $\hat h$ and satisfies the bound $\kappa(S)\le Cd_{\max}$.
\end{cor}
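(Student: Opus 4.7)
The plan is to recognize that the displayed inequality immediately preceding the corollary,
\begin{equation*}
    c\norm{u_{\Gamma}}_{\mathbb{R}^d}^2 \le s(u,u) \le C d_{\max} \norm{u_{\Gamma}}_{\mathbb{R}^d}^2,
\end{equation*}
is exactly a two-sided Rayleigh-quotient bound for $S$. Indeed, since $s(u,u) = u_{\Gamma}^{\top} S u_{\Gamma}$ for $u$ the piecewise discrete harmonic extension of $u_{\Gamma}$, and since $S$ acts on interface vectors $u_{\Gamma} \in \mathbb{R}^d$, the inequality can be read directly as
\begin{equation*}
    c \le \frac{u_{\Gamma}^{\top} S u_{\Gamma}}{u_{\Gamma}^{\top} u_{\Gamma}} \le C d_{\max}
\end{equation*}
for every nonzero $u_{\Gamma} \in \mathbb{R}^d$.

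Given this, I would conclude by the Courant--Fischer characterization of the extremal eigenvalues of a symmetric positive definite matrix: the lower bound gives $\lambda_{\min}(S) \ge c$, and the upper bound gives $\lambda_{\max}(S) \le C d_{\max}$. Taking the ratio yields
\begin{equation*}
    \kappa(S) = \frac{\lambda_{\max}(S)}{\lambda_{\min}(S)} \le \frac{C}{c}\, d_{\max},
\end{equation*}
which is the claimed bound, with a constant that is independent of the mesh parameter $\hat h$ because neither $c$ nor $C$ depend on $\hat h$ (this being the content of Lemma~\ref{lem:FEM_equivalent}).

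There is no real obstacle here: all the substance has already been absorbed into Lemma~\ref{lem:FEM_equivalent} (and before it into Lemma~\ref{lem:equivalent}, which provides the $\hat h$-independent equivalence between the interface data and the continuous harmonic extension). The only thing to be careful about is to ensure the quantifiers match: the inequality preceding the corollary must hold for every $u_{\Gamma}$, which it does because any interface vector is the trace of its piecewise discrete harmonic extension, and that extension is the minimizer whose energy equals $s(u,u)$ by Lemma~\ref{lem:Schur_min}. So the proof is essentially a one-line invocation of the preceding display.
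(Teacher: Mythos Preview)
Your proposal is correct and matches the paper's approach: the paper simply labels the corollary an ``immediate consequence'' of the displayed two-sided bound on $s(u,u)=u_{\Gamma}^{\top}Su_{\Gamma}$, and your argument spells out precisely that immediacy via the Rayleigh quotient and Courant--Fischer.
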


We note that this phenomenon was already observed, although not rigorously investigated, for scale-free graphs in \cite{Arioli2017}.

\subsection{Schwarz iteration}
With the above auxiliary results we can reformulate the Neumann-Neumann method as an abstract additive Schwarz iteration. We choose $V=V_h(\Gamma)$ and $V_i=V_i(\Gamma)$, where $V_i(\Gamma)\subset V_h(\Gamma)$ denotes the subspace of discrete harmonic functions that vanish on $\Gamma\backslash\mathsf{V}_i$. For the bilinear forms we set $b(u,v)=s(u,v)$ on $V\times V$ and
\begin{equation}
    b_i(u,v)=s_i\qty\big(I_h(\nu_iu),I_h(\nu_iv))=a_i\qty\big(\mathcal{H}_i(\nu_iu),\mathcal{H}_i(\nu_iv))
\end{equation}
on $V_i\times V_i$. The counting functions $\nu_i$ are defined on $\Gamma\cup\partial\mathsf{G}$ by
\begin{equation}
    \nu_i(\mathsf{v})=\begin{cases}\qty\big|\qty\big{j:\mathsf{v}\in\mathsf{V}_j}|,\quad&\mathsf{v}\in(\Gamma\cap\mathsf{V}_i)\cup\partial\mathsf{G}_i,\\0,&\mathsf{v}\in\Gamma\backslash\mathsf{V}_i.\end{cases}
\end{equation}
The pseudoinverses $\nu_i^{\dag}$ of the $\nu_i$ functions, given as
\begin{equation}
    \nu_i^{\dag}(v)=\begin{cases}\nu_i^{-1}(\mathsf{v}),\quad&\mathsf{v}\in(\Gamma\cap\mathsf{V}_i)\cup\partial\mathsf{G}_i,\\0,&\mathsf{v}\in\Gamma\backslash\mathsf{V}_i,\end{cases}
\end{equation}
define a partition of unity on $\Gamma\cup\partial\mathsf{G}$; that is,
\begin{equation}
    \sum_{i=1}^N\nu_i^{\dag}(\mathsf{v})\equiv1,\qquad\mathsf{v}\in\Gamma\cup\partial\mathsf{G}.
\end{equation}
Finally, the operators $T_i:V\mapsto V_i$ are defined by
\begin{equation}
    b_i(T_iu,v)=b(u,v),\qquad v\in V_i,
\end{equation}
and the operator $T$ by
\begin{equation}\label{eq:T}
    T=T_1+T_2+\dots+T_N.
\end{equation}

\begin{prop}\label{proposition:T}
    The operator $T$ defined by \eqref{eq:T} is invertible and for all $u\in V$ the following inequality holds
    \begin{equation}
        \gamma_0s(u,u)\le s(Tu,u)\le\gamma_1\rho(\mathcal{E})s(u,u),
    \end{equation}
    where $\gamma_0$ and $\gamma_1$ are constants independent of $\hat h$, where $\mathcal{E}=\qty{\epsilon_{ij}}_{i,j=1}^N$ is defined elementwise by
    \begin{equation}
        \epsilon_{ij}=\begin{cases}1,\quad&\mathsf{V}_i\cap\mathsf{V}_j\neq\emptyset,\\0,\quad&\text{otherwise.}\end{cases}
    \end{equation}
\end{prop}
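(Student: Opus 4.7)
My plan is to apply Theorem~\ref{theorem:Schwarz} with the data already set up before the statement: $V=V_h(\Gamma)$, $V_i=V_i(\Gamma)$, $b=s$, and the weighted local form $b_i(\cdot,\cdot)=s_i(I_h(\nu_i\,\cdot),I_h(\nu_i\,\cdot))$. The essential tools will be the partition of unity $\sum_i\nu_i^{\dag}\equiv 1$ on $\Gamma\cup\partial\mathsf{G}$, the subassembly identity $S=\sum_i S^{(i)}$, and the mesh-independent trace equivalence of Lemma~\ref{lem:FEM_equivalent}. All three constants $C_0,\omega,\rho(\mathcal{E})$ then have to be shown to be independent of $\hat h$.

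For the stable splitting (i), I would take the canonical decomposition suggested by the partition of unity: $u_i:=\mathcal{H}(\nu_i^{\dag}u_\Gamma)$. Each $u_i$ lies in $V_i(\Gamma)$ since $\nu_i^{\dag}$ vanishes on $\Gamma\setminus\mathsf{V}_i$, and linearity of the piecewise discrete harmonic extension yields $u=\sum_i u_i$. The pointwise identity $\nu_i\nu_i^{\dag}\equiv 1$ on $\mathsf{V}_i\cap\Gamma$ then forces $b_i(u_i,u_i)=s_i(u,u)$; summing over $i$ and invoking $s=\sum_i s_i$ produces $\sum_i b_i(u_i,u_i)=s(u,u)$, so the splitting is stable with $C_0=1$.

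For the local stability (ii), writing $\alpha=u_i|_\Gamma$ (supported on $\mathsf{V}_i\cap\Gamma$ by definition of $V_i(\Gamma)$), I would expand $b(u_i,u_i)=\sum_k s_k(u_i,u_i)$ and apply the upper half of Lemma~\ref{lem:FEM_equivalent} termwise to get $s_k(u_i,u_i)\le C\|\alpha\|_{\mathsf{V}_k\cap\Gamma}^2$. Since each vertex of $\mathsf{V}_i\cap\Gamma$ belongs to at most $d_{\max}$ subdomains, summation yields $b(u_i,u_i)\le Cd_{\max}\|\alpha\|_{\mathsf{V}_i\cap\Gamma}^2$. The lower half of Lemma~\ref{lem:FEM_equivalent}, together with $\nu_i\ge 1$ on $\mathsf{V}_i\cap\Gamma$, gives $b_i(u_i,u_i)\ge c\|\nu_i\alpha\|_{\mathsf{V}_i\cap\Gamma}^2\ge c\|\alpha\|_{\mathsf{V}_i\cap\Gamma}^2$. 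Combining produces $\omega=Cd_{\max}/c$, which is independent of $\hat h$.

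I expect the main obstacle to be the strengthened Cauchy--Schwarz (iii) and in particular the identification of the nonzero pattern of $\mathcal{E}$. The universal bound $\epsilon_{ij}\le 1$ follows immediately from the ordinary Cauchy--Schwarz inequality for $s$. The sharper value $\epsilon_{ij}=0$ when $\mathsf{V}_i\cap\mathsf{V}_j=\emptyset$ must be read off from the expansion $s(u_i,u_j)=\sum_k a_k(u_i,u_j)$ by observing that $u_i|_{\mathsf{G}_k}\equiv 0$ whenever $\mathsf{V}_k\cap\mathsf{V}_i=\emptyset$ (the local boundary trace vanishes and the discrete harmonic extension is unique), so only the combinatorially controlled set of subgraphs simultaneously touching $\mathsf{V}_i$ and $\mathsf{V}_j$ contributes; this is where the adjacency structure of the decomposition enters and controls $\rho(\mathcal{E})$ by a constant depending only on the combinatorics, not on $\hat h$. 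Once (i)--(iii) are verified, Theorem~\ref{theorem:Schwarz} delivers the invertibility of $T$ together with the bounds $\gamma_0=C_0^{-2}=1$ and $\gamma_1=\omega$, both $\hat h$-independent, completing the proof.
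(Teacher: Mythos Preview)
Your proposal is correct and follows essentially the same route as the paper: the same partition-of-unity decomposition $u_i=\mathcal{H}(\nu_i^{\dag}u_\Gamma)$ giving $C_0=1$ for (i), the same trace-norm sandwich via Lemma~\ref{lem:FEM_equivalent} for (ii), and the same adjacency-based $\epsilon_{ij}$ for (iii). The only cosmetic difference is that in (ii) the paper writes out the Sobolev embedding explicitly to pass from $\|u_i\|_{\mathsf{V}_i\cap\Gamma}$ to $s_i(u_i,u_i)$, whereas you invoke the lower half of Lemma~\ref{lem:FEM_equivalent} directly; the underlying argument is identical.
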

\begin{proof}
    We have to establish the three estimates of Theorem \ref{theorem:Schwarz}.

    Assumption (i): For $u\in V$ we choose $u_i=I_h\qty\big(\nu_i^{\dag}u)$, $i=1,2,\dots,N$. Clearly $u_i\in V_i$ and $u=\sum_{i=1}^Nu_i$ holds, and
    \begin{equation}
        b_i(u_i,u_i)=a_i(\mathcal{H}_iu,\mathcal{H}_iu)=a_i(u,u).
    \end{equation}
    By subassembly, this shows that
    \begin{equation}
        \sum_{i=1}^Nb_i(u_i,u_i)=a(u,u)=s(u,u)=b(u,u).
    \end{equation}

    Assumption (ii): For $u_i\in V_i$ we have that
    \begin{equation}
        s(u_i,u_i)=s_i(u_i,u_i)+\sum_{j:\mathsf{V}_j\cap\mathsf{V}_i\neq\emptyset}s_j(u_i,u_i).
    \end{equation}
    Using Lemma \ref{lem:FEM_equivalent} shows that $s_i(u_i,u_i)\le C\norm{u_i}_{\mathsf{V}_i\cap\Gamma}$ and that
    \begin{equation}
        s_j(u_i,u_i)\le C\norm{u_i}_{\mathsf{V}_j\cap\Gamma}^2\le C\norm{u_i}_{\mathsf{V}_i\cap\Gamma}^2,
    \end{equation}
    since $u_i\in V_i$, and thus $u_i(x)=0$ for $x\in(\mathsf{V}_j\cap\Gamma)\backslash\mathsf{V}_i$. Using Sobolev's embedding we can further bound $\norm{u_i}_{\mathsf{V}_i\cap\Gamma}^2$ as
    \begin{equation}
        \begin{aligned}
            \norm{u_i}_{\mathsf{V}_i\cap\Gamma}^2&\le C\norm{u_i}_{L^{\infty}(\mathsf{G}_i)}^2\le C\norm{u_i}_{H^1(\mathsf{G}_i)}^2\le Ca_i(u_i,u_i)\\
            &=Cs_i(u_i,u_i)\le Cs_i\qty\big(I_h(\nu_iu_i),I_h(\nu_iu_i))=Cb_i(u_i,u_i).
        \end{aligned}
    \end{equation}
    Combining the above yields $b(u_i,u_i)\le Cb_i(u_i,u_i)$ for $u_i\in V_i$ as required.

    Assumption (iii): It is easy to see that
    \begin{equation}
        \epsilon_{ij}=\begin{cases}1,\quad&\mathsf{V}_i\cap\mathsf{V}_j\neq\emptyset,\\0,\quad&\text{otherwise,}\end{cases}
    \end{equation}
    as $V_i\cap V_j\neq\emptyset$ if and only if $\mathsf{V}_i\cap\mathsf{V}_j\neq\emptyset$.
\end{proof}

This shows that the condition number of the preconditioned system is independent of $\hat h$. We note that $\rho(\mathcal{E})\le d_{\max}$ via the Gershrogin theorem. Finally, we state our main theorem.

\begin{thm}
    The Neumann-Neumann algorithm converges to the solution of \eqref{eq:discrete_PDE_matrix} with a geometric rate that is independent of $\hat h$.
\end{thm}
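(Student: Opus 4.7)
The plan is to recognize the discrete Neumann-Neumann iteration described after $(N_i)$ as a preconditioned Richardson iteration of the form $u_\Gamma^{n+1} = u_\Gamma^n + \theta(g - T u_\Gamma^n)$, where $T$ is the additive Schwarz operator from \eqref{eq:T}, and then to convert the spectral estimate of Proposition \ref{proposition:T} into a geometric contraction rate for the iterates.

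First, I would make the algebraic identification precise. The update
\begin{equation}
u_\Gamma^{n+1} - u_\Gamma^n = \theta\, D_\Gamma\bigl({S^{(1)}}^{-1} + \cdots + {S^{(N)}}^{-1}\bigr) D_\Gamma\,(g_\Gamma - S u_\Gamma^n)
\end{equation}
amounts to computing, scaling by the partition of unity $\nu_i^{\dag}$ and summing the local Neumann solves $w_\Gamma^{(i),n+1}$. The same object is produced by $\sum_i T_i$ on the space $V_h(\Gamma)$: by definition $b_i(T_i u, v) = s(u,v)$ for all $v \in V_i$, and unpacking $b_i$ via the harmonic extension $\mathcal{H}_i$ and the counting-function scaling identifies $T_i u_\Gamma$ with the contribution of subgraph $i$ in the Neumann-Neumann step. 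Thus the iteration coincides with Richardson applied to $T u_\Gamma = g$ on $V_h(\Gamma)$.

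Second, Proposition \ref{proposition:T} asserts that $T$ is invertible and symmetric positive definite with respect to $s(\cdot,\cdot)$, with spectrum contained in $[\gamma_0,\gamma_1\rho(\mathcal{E})]$ and condition number $\kappa(T) \le \gamma_1\rho(\mathcal{E})/\gamma_0$ independent of $\hat h$. For any fixed $\theta \in \bigl(0,\, 2/(\gamma_1\rho(\mathcal{E}))\bigr)$ the error operator $I - \theta T$ is then a contraction in the $s(\cdot,\cdot)$-norm with factor
\begin{equation}
\max\bigl\{|1 - \theta \gamma_0|,\; |1 - \theta \gamma_1\rho(\mathcal{E})|\bigr\} < 1;
\end{equation}
the optimal relaxation yields the standard $(\kappa - 1)/(\kappa+1)$ rate. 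Since $\gamma_0$, $\gamma_1$ and $\rho(\mathcal{E}) \le d_{\max}$ are all independent of $\hat h$, so is the contraction factor. Hence $u_\Gamma^n$ converges geometrically to the unique solution of \eqref{eq:schur}, and back-substitution through the Dirichlet subproblems $(D_i)$ recovers the full finite element solution of \eqref{eq:discrete_PDE_matrix} at the same rate.

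The main obstacle is the first step: although the formal computation is short, matching the purely algebraic Neumann-Neumann preconditioner with the variational Schwarz operator $T$ requires carefully tracking how the scaling $\nu_i^{\dag}$ corresponds to the diagonal matrix $D_\Gamma$, and verifying that the piecewise discrete harmonic extension defining $V_h(\Gamma)$ is compatible with the subassembly of the local Neumann problems $(N_i)$. Once this correspondence is established, the theorem follows immediately from the condition number bound of Proposition \ref{proposition:T} and the elementary convergence theory of Richardson iteration for SPD operators.
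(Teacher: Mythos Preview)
Your proposal is correct and follows essentially the same route as the paper: the paper's proof is the single sentence ``The statement follows from Proposition \ref{proposition:T} and Lemma \ref{lem:FEM_equivalent},'' and you have simply unpacked what that sentence means, namely that the discrete Neumann-Neumann update is Richardson for the reformulated problem $Tu=g$ on $V_h(\Gamma)$, and that the $\hat h$-independent spectral bounds on $T$ yield an $\hat h$-independent contraction factor. The only ingredient you do not explicitly name is Lemma \ref{lem:FEM_equivalent}, which the paper invokes so that the geometric convergence established in the $s(\cdot,\cdot)$-norm transfers, with $\hat h$-independent constants, to convergence of the full finite element iterate; your final sentence about back-substitution implicitly relies on exactly this equivalence.
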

\begin{proof}
    The statement follows from Proposition \ref{proposition:T} and Lemma \ref{lem:FEM_equivalent}.
\end{proof}

\begin{rem}
    We note that in a multidimensional setting one usually assumes that the substructures and the elements are shape regular, meaning that the number of neighbours of any subdomain, and thus $\rho(\mathcal{E})$, is bounded by a constant. Furthermore, the verification of assumption (i) and (ii) is more challenging, and accordingly the estimates on $\frac{s(Tu,u)}{s(u,u)}$ are more complicated. In particular, usually polylogarithmic bounds of the form $\tilde h^{-2}\qty\Big(1+\log\frac{\tilde h}{\hat h})^2$ appear, where $\tilde h$ denotes the size of a typical subdomain, see \cite{Dryja1995,Toselli2005}. The main technical difficulty is the fact that the boundary spaces of the domains are equipped with the $H^{\frac{1}{2}}$ Sobolev-Slobodeckij seminorm, which cannot be so straightforwardly estimated as in our case.
\end{rem}

\section{Numerical experiments}\label{sec:num}
In this section we introduce and discuss some numerical experiments. The C$++$ implementation mainly relies on Eigen 3.4.0 and is compiled with GCC 13.2.1. The graphs are generated with NetworkX 3.1 in Python 3.11.6. The experiments have been performed on a computer with Intel(R) Core(TM) i7-8565U CPU @ 1.80GHz and 16 GB of RAM in Python 3.11.6.  While our convergence theory holds for arbitrary (nonoverlapping) decomposition, in all experiments, we decompose the quantum graph to its edges. The Schur complement problems are solved with BiCGSTAB without preconditioning, with diagonal preconditioning, with first-degree polynomial preconditioning and finally with Neumann-Neumann preconditioning. While Corollary \ref{corollary:S} shows that condition number of the Schur complement is independent of $\hat h$, it might still increase as the number of vertices grows, as indicated by the results below. Interestingly, this dependence is already somewhat mitigated with a diagonal preconditioner and seemingly eliminated with a polynomial or Neumann-Neumann preconditioner. We found that for small graphs with $|\mathsf{V}|\ll1000$ solving the Schur complement system without preconditioning is the fastest independently of $\hat h$, but for larger graphs preconditioning is more and more crucial as $\log_2\qty\big(\hat h^{-1})$ increases. We note that while a single (BiCGSTAB) iteration with the diagonal or the polynomial preconditioner is cheaper than an iteration with the Neumann-Neumann preconditioner the former approaches are still slower in these cases since the diagonal of the Schur complement has to be computed first. While the performance of these methods may depend on various implementation factors, the following experiments clearly show that the runtime of the diagonal and the polynomial preconditioners blow up as the size of the graph of $\log_2\qty\big(\hat h^{-1})$ increase.

The initial guess is set to the zero vector and the iteration is stopped after the relative residual norm reduces below the square root of the machine precision $\varepsilon\approx2.2204\cdot10^{-16}$.

\subsection{Dorogovtsev-Goltsev-Mendes graphs}
The first set of test graphs are a family of scale-free planar graphs introduced in \cite{Dorogovtsev2002}, defined iteratively as follows. The graph $\mathsf{DGM}(0)$ is the path graph with two vertices. The graph $\mathsf{DGM}(n+1)$ is generated from $\mathsf{DGM}(n)$ by adding a new vertex for each edge and connecting it with the endpoint of the edge. The graph $\mathsf{DGM}(n)$ has $|\mathsf{V}|=\frac{3}{2}\qty\big(3^n+1)$ and $|\mathsf{E}|=3^n$. Figure \ref{fig:russki} shows the first few graphs of this iteration. First we set $\log_2\qty\big(\hat h^{-1})=6$ and apply BiCGSTAB to the Schur complement system of $\mathsf{DGM}$ graphs of increasing size. Tables \ref{tab:russki:size:iter} and \ref{tab:russki:size:runtime} show the number of iterations and the runtime, respectively, without preconditioning and with diagonal, polynomial and Neumann-Neumann preconditioning. Tables \ref{tab:russki:h:iter} and \ref{tab:russki:h:runtime} show the same for $\mathsf{DGM}(7)$ with increasing $\log_2\qty\big(\hat h^{-1})$.

\begin{figure}[H]
    \begin{center}
        \begin{tikzpicture}[scale=0.5, every node/.style={scale=1}]
            \begin{scope}
                \node[draw,shape=circle,fill=black,inner sep=1pt] (1) at (90:1) {};
                \node[draw,shape=circle,fill=black,inner sep=1pt] (2) at (210:1) {};
                \node[draw,shape=circle,fill=black,inner sep=1pt] (3) at (330:1) {};
                \path[-] (1) edge (2);
                \path[-] (2) edge (3);
                \path[-] (3) edge (1);
            \end{scope}
            \begin{scope}[xshift=4cm,yshift=0.35566cm]
                \node[draw,shape=circle,fill=black,inner sep=1pt] (1) at (30:1) {};
                \node[draw,shape=circle,fill=black,inner sep=1pt] (2) at (90:1) {};
                \node[draw,shape=circle,fill=black,inner sep=1pt] (3) at (150:1) {};
                \node[draw,shape=circle,fill=black,inner sep=1pt] (4) at (210:1) {};
                \node[draw,shape=circle,fill=black,inner sep=1pt] (5) at (270:1) {};
                \node[draw,shape=circle,fill=black,inner sep=1pt] (6) at (330:1) {};
                \path[-] (1) edge (2);
                \path[-] (2) edge (3);
                \path[-] (3) edge (4);
                \path[-] (4) edge (5);
                \path[-] (5) edge (6);
                \path[-] (6) edge (1);
                \path[-] (2) edge (4);
                \path[-] (4) edge (6);
                \path[-] (6) edge (2);
            \end{scope}
            \begin{scope}[xshift=8.8cm,yshift=0.64438cm]
                \node[draw,shape=circle,fill=black,inner sep=1pt] (11) at (30:1) {};
                \node[draw,shape=circle,fill=black,inner sep=1pt] (12) at (90:1) {};
                \node[draw,shape=circle,fill=black,inner sep=1pt] (13) at (150:1) {};
                \node[draw,shape=circle,fill=black,inner sep=1pt] (14) at (210:1) {};
                \node[draw,shape=circle,fill=black,inner sep=1pt] (15) at (270:1) {};
                \node[draw,shape=circle,fill=black,inner sep=1pt] (16) at (330:1) {};
                \node[draw,shape=circle,fill=black,inner sep=1pt] (21) at ([shift=(30:1)]30:1) {};
                \node[draw,shape=circle,fill=black,inner sep=1pt] (22) at ([shift=(30:1)]90:1) {};
                \node[draw,shape=circle,fill=black,inner sep=1pt] (26) at ([shift=(30:1)]330:1) {};
                \node[draw,shape=circle,fill=black,inner sep=1pt] (32) at ([shift=(150:1)]90:1) {};
                \node[draw,shape=circle,fill=black,inner sep=1pt] (33) at ([shift=(150:1)]150:1) {};
                \node[draw,shape=circle,fill=black,inner sep=1pt] (34) at ([shift=(150:1)]210:1) {};
                \node[draw,shape=circle,fill=black,inner sep=1pt] (44) at ([shift=(270:1)]210:1) {};
                \node[draw,shape=circle,fill=black,inner sep=1pt] (45) at ([shift=(270:1)]270:1) {};
                \node[draw,shape=circle,fill=black,inner sep=1pt] (46) at ([shift=(270:1)]330:1) {};
                \path[-] (11) edge (12);
                \path[-] (12) edge (13);
                \path[-] (13) edge (14);
                \path[-] (14) edge (15);
                \path[-] (15) edge (16);
                \path[-] (16) edge (11);
                \path[-] (12) edge (14);
                \path[-] (14) edge (16);
                \path[-] (16) edge (12);
                \path[-] (12) edge (21);
                \path[-] (12) edge (22);
                \path[-] (16) edge (21);
                \path[-] (16) edge (26);
                \path[-] (21) edge (22);
                \path[-] (26) edge (21);
                \path[-] (12) edge (32);
                \path[-] (12) edge (33);
                \path[-] (14) edge (33);
                \path[-] (14) edge (34);
                \path[-] (32) edge (33);
                \path[-] (33) edge (34);
                \path[-] (14) edge (44);
                \path[-] (14) edge (45);
                \path[-] (16) edge (45);
                \path[-] (16) edge (46);
                \path[-] (44) edge (45);
                \path[-] (45) edge (46);
            \end{scope}
        \end{tikzpicture}
    \end{center}
    \caption{The graphs $\mathsf{DGM}(1)$, $\mathsf{DGM}(2)$ and $\mathsf{DGM}(3)$.}
    \label{fig:russki}
\end{figure}

\begin{center}
    \begin{threeparttable}
        \begin{tabular}{ m{4.5em} c c c c }
            \hline
            Graph & No prec. & Diagonal & Polynomial & Neumann-Neumann\\
            \hline
            $\mathsf{DGM}(5)$ & 25 & 8 & 7 & 7\\
            $\mathsf{DGM}(6)$ & 42 & 11 & 9 & 9\\
            $\mathsf{DGM}(7)$ & 86 & 15 & 11 & 11\\
            $\mathsf{DGM}(8)$ & 145 & 18 & 13 & 12\\
            $\mathsf{DGM}(9)$ & 244 & 21 & 14 & 14\\
            \hline
        \end{tabular}
        \caption{Number of BiCGSTAB iterations for the Schur complement systems of Dorogovtsev-Goltsev-Mendes graphs of increasing size with $\log_2\qty\big(\hat h^{-1})=6$.}
        \label{tab:russki:size:iter}
    \end{threeparttable}
\end{center}

\begin{center}
    \begin{threeparttable}
        \begin{tabular}{ m{4.5em} c c c c }
            \hline
            Graph & No prec. & Diagonal & Polynomial & Neumann-Neumann\\
            \hline
            $\mathsf{DGM}(5)$ & $0.0277~s$ & $0.0448~s$ & $0.0387~s$ & $0.0672~s$\\
            $\mathsf{DGM}(6)$ & $0.0931~s$ & $0.2528~s$ & $0.2675~s$ & $0.2468~s$\\
            $\mathsf{DGM}(7)$ & $0.4907~s$ & $2.0924~s$ & $2.1583~s$ & $1.0146~s$\\
            $\mathsf{DGM}(8)$ & $2.4740~s$ & $19.3271~s$ & $19.4995~s$ & $5.2082~s$\\
            $\mathsf{DGM}(9)$ & $12.6867~s$ & $184.5470~s$ & $185.9770~s$ & $38.8155~s$\\
            \hline
        \end{tabular}
        \caption{Runtime of BiCGSTAB iteration for the Schur complement systems of Dorogovtsev-Goltsev-Mendes graphs of increasing size with $\log_2\qty\big(\hat h^{-1})=6$.}
        \label{tab:russki:size:runtime}
    \end{threeparttable}
\end{center}

\begin{center}
    \begin{threeparttable}
        \begin{tabular}{ m{4.5em} c c c c }
            \hline
            $\log_2\qty\big(\hat h^{-1})$ & No prec. & Diagonal & Polynomial & Neumann-Neumann\\
            \hline
            4 & 81 & 14 & 11 & 11\\
            6 & 86 & 15 & 11 & 11\\
            8 & 72 & 15 & 11 & 11\\
            10 & 86 & 14 & 11 & 11\\
            12 & 87 & 15 & 11 & 11\\
            \hline
        \end{tabular}
        \caption{Number of BiCGSTAB iterations for the Schur complement system of $\mathsf{DGM}(7)$ with increasingly finer meshes.} 
        \label{tab:russki:h:iter}
    \end{threeparttable}
\end{center}

\begin{center}
    \begin{threeparttable}
        \begin{tabular}{ m{4.5em} c c c c }
            \hline
            $\log_2\qty\big(\hat h^{-1})$ & No prec. & Diagonal & Polynomial & Neumann-Neumann\\
            \hline
            4 & $0.1038~s$ & $0.4758~s$ & $0.4829~s$ & $0.6264~s$\\
            6 & $0.4853~s$ & $2.0668~s$ & $2.0902~s$ & $0.9900~s$\\
            8 & $1.8201~s$ & $9.3257~s$ & $9.4613~s$ & $2.1968~s$\\
            10 & $8.8582~s$ & $37.9166~s$ & $38.7955~s$ & $6.7927~s$\\
            12 & $42.9914~s$ & $198.7080~s$ & $202.073~s$ & $26.2862~s$\\
            \hline
        \end{tabular}
        \caption{Runtime of BiCGSTAB iteration for the Schur complement system of $\mathsf{DGM}(7)$ with increasingly finer meshes.} 
        \label{tab:russki:h:runtime}
    \end{threeparttable}
\end{center}

\subsection{Barab\'asi-Albert model}
Next, we test our method on scale-free graphs with $|\mathsf{E}|\approx2|\mathsf{V}|$ generated using the Barab\'asi-Albert model \cite{Barabasi1999}. Unlike the $\mathsf{DGM}$ graphs, which are generated deterministically, the Barab\'asi-Albert model has randomness involved, and thus the following results have to be understood in a probabilistic sense.

Again, we set $\log_2\qty\big(\hat h^{-1})=6$ and apply BiCGSTAB to the Schur complement system of scale-free graphs of increasing size. Tables \ref{tab:scale-free:size:iter} and \ref{tab:scale-free:size:runtime} show the number of iterations and the runtime, respectively, without preconditioning and with diagonal, polynomial and Neumann-Neumann preconditioning. Tables \ref{tab:scale-free:h:iter} and \ref{tab:scale-free:h:runtime} show the same for $\mathsf{SF}(1000)$ with increasing $\log_2\qty\big(\hat h^{-1})$.

\begin{center}
    \begin{threeparttable}
        \begin{tabular}{ m{4.5em} c c c c }
            \hline
            Graph & No prec. & Diagonal & Polynomial & Neumann-Neumann\\
            \hline
            $\mathsf{SF}(100)$ & 28 & 18 & 9 & 9\\
            $\mathsf{SF}(500)$ & 47 & 21 & 10 & 10\\
            $\mathsf{SF}(1000)$ & 57 & 18 & 10 & 10\\
            $\mathsf{SF}(2000)$ & 68 & 20 & 10 & 10\\
            $\mathsf{SF}(5000)$ & 83 & 20 & 10 & 10\\
            \hline
        \end{tabular}
        \caption{Number of BiCGSTAB iterations for the Schur complement systems of scale-free graphs of increasing size with $\log_2\qty\big(\hat h^{-1})=6$.}
        \label{tab:scale-free:size:iter}
    \end{threeparttable}
\end{center}

\begin{center}
    \begin{threeparttable}
        \begin{tabular}{ m{4.5em} c c c c }
            \hline
            Graph & No prec. & Diagonal & Polynomial & Neumann-Neumann\\
            \hline
            $\mathsf{SF}(100)$ & $0.0198~s$ & $0.0285~s$ & $0.0320~s$ & $0.1244~s$\\
            $\mathsf{SF}(500)$ & $0.1373~s$ & $0.4779~s$ & $0.4694~s$ & $0.3496~s$\\
            $\mathsf{SF}(1000)$ & $0.3049~s$ & $1.7175~s$ & $1.7336~s$ & $0.8347~s$\\
            $\mathsf{SF}(2000)$ & $0.7321~s$ & $6.9985~s$ & $7.1694~s$ & $2.0819~s$\\
            $\mathsf{SF}(5000)$ & $2.2780~s$ & $46.1882~s$ & $46.2169~s$ & $9.1687~s$\\
            \hline
        \end{tabular}
        \caption{Runtime of BiCGSTAB iteration for the Schur complement systems of scale-free graphs of increasing size with $\log_2\qty\big(\hat h^{-1})=6$.}
        \label{tab:scale-free:size:runtime}
    \end{threeparttable}
\end{center}

\begin{center}
    \begin{threeparttable}
        \begin{tabular}{ m{4.5em} c c c c }
            \hline
            $\log_2\qty\big(\hat h^{-1})$ & No prec. & Diagonal & Polynomial & Neumann-Neumann\\
            \hline
            4 & 47 & 19 & 9 & 9\\
            6 & 44 & 19 & 9 & 9\\
            8 & 46 & 17 & 9 & 9\\
            10 & 46 & 19 & 9 & 9\\
            12 & 49 & 20 & 9 & 9\\
            \hline
        \end{tabular}
        \caption{Number of BiCGSTAB iterations for the Schur complement system of $\mathsf{SF}(100)$ with increasingly finer meshes.} 
        \label{tab:scale-free:h:iter}
    \end{threeparttable}
\end{center}

\begin{center}
    \begin{threeparttable}
        \begin{tabular}{ m{4.5em} c c c c }
            \hline
            $\log_2\qty\big(\hat h^{-1})$ & No prec. & Diagonal & Polynomial & Neumann-Neumann\\
            \hline
            4 & $0.0642~s$ & $0.3847~s$ & $0.4120~s$ & $0.4987~s$\\
            6 & $0.2891~s$ & $1.7520~s$ & $1.7520~s$ & $0.8524~s$\\
            8 & $1.3794~s$ & $7.8648~s$ & $7.8611~s$ & $1.9834~s$\\
            10 & $5.3083~s$ & $32.3043~s$ & $32.4282~s$ & $5.9379~s$\\
            12 & $25.7713~s$ & $167.5980~s$ & $167.9090~s$ & $22.7556~s$\\
            \hline
        \end{tabular}
        \caption{Runtime of BiCGSTAB iteration for the Schur complement system of $\mathsf{SF}(1000)$ with increasingly finer meshes.} 
        \label{tab:scale-free:h:runtime}
    \end{threeparttable}
\end{center}


\end{document}